\documentclass{article}
\usepackage{amsmath,amsfonts,amssymb,amsthm,bbm,latexsym,mathrsfs}
\usepackage{graphicx,color,epsfig,fancyhdr,dsfont}
\usepackage{enumerate}
\usepackage{hyperref}
\usepackage{indentfirst}
\usepackage{amsmath,amscd}
\usepackage{caption}
\usepackage{graphicx, subfig}
\usepackage[all]{xy}

\newtheorem{definition}{Definition}
\newtheorem{lemma}[definition]{Lemma}
\newtheorem{theorem}[definition]{Theorem}
\newtheorem{coro}[definition]{Corollary}
\newtheorem{prop}[definition]{Proposition}
\newtheorem{example}[definition]{Example}
\newtheorem{remark}[definition]{Remark}
\newtheorem{fact}[definition]{Fact}
\newtheorem{ques}[definition]{Question}

\newcommand{\Th}{\mathrm{Th}}

\newcommand{\cl}{\mathrm{cl}}

\newcommand{\pCF}{p\mathrm{CF}}

\newcommand{\Q}{\mathbb{Q}}

\newcommand{\SL}{\mathrm{SL}}

\newcommand{\M}{\mathbb{M}}

\newcommand{\Gen}{\mathrm{Gen}}

\newcommand{\R}{\mathbb{R}}

\newcommand{\Ss}{\mathbf{S}}

\newcommand{\up}{\Pi^\mathrm{def}}
\newcommand{\ups}{\Pi^\mathrm{def}_\mathrm{s}}
\newcommand{\Prob}{\mathrm{Prob}}

\newcommand{\Gg}{\mathbf{G}}

\newcommand{\Pp}{\mathbf{P}}
\newcommand{\PP}{\mathbb{P}}

\newcommand{\Ff}{\mathbf{F}}

\newcommand{\F}{\mathcal{F}}

\newcommand{\fullS}{S^*}

\begin{document}
\title{Minimal proximal definable flows over the $p$-adics}
\author{Zhentao Zhang}
\date{}
\maketitle

\begin{abstract}
Let $G$ be a definable group in an NIP theory. We prove that every minimal proximal definable $G$-flow is strongly proximal. Consequently, the universal minimal proximal definable $G$-flow $\up(G)$ coincides with the minimal strongly proximal definable $G$-flow $\up_s(G)$.

Furthermore, for a $p$-adic definable group $G$, we can compute $\up(G)$ explicitly. We show that $\up(G)$ is exactly $\up(S)$ where $S$ is the semisimple part of the definably amenable-semisimple decomposition of $G$. In addition, $\up(S)\cong S^*_\F(\Q_p)$, the space of types of full dimension on $\F$, where $\F=\Ff(\Q_p)$ for a flag variety $\Ff$ constructed from $S$.   
\end{abstract}

\section{Introduction}

Proximality is one of the basic contraction phenomena in topological dynamics. Let a group $G$ act on a compact Hausdorff space $X$. The flow $(G,X)$ is \emph{proximal} when  every pair of points can be moved arbitrarily close by the action.  The flow is \emph{strongly proximal} when the induced action
on the compact space $\Prob(X)$ of regular Borel probability measures can move every measure
arbitrarily close to a Dirac measure. Strong proximality implies proximality, but the converse fails
in general. The corresponding universal minimal objects are the universal minimal proximal flow
$\Pi(G)$ and the universal minimal strongly proximal flow $\Pi_{\mathrm{s}}(G)$. See \cite{G-Book} for details.

Amenability of $G$ is equivalent to the triviality of $\Pi_\mathrm{s}(G)$ (Theorem III.3.1 \cite{G-Book}). And strong amenability is equivalent to the triviality of $\Pi(G)$ by definition (Chapter II \cite{G-Book}).  Since an amenable group need not be
strongly amenable (for example, see \cite{Diff}). Identifying proximal and strongly proximal dynamics therefore requires additional conditions.

Definable topological dynamics studies actions whose orbit maps are definable in an ambient
first-order structure. Let $M$ be a structure in a complete theory $T$ and $G=G(M)$ a group definable in $M$. Let $X$ be a compact Hausdorff space. The flow $(G,X)$ is called a \emph{definable $G$-flow} if for every $x\in X$, the map $f_x:G\rightarrow X:g\mapsto gx$ is a \emph{definable map}, that is, for any disjoint closed subsets $C_1,C_2$ of $X$, there exists a definable subset $D$ of $G$ separating $f_x^{-1}(C_1)$ and $f_x^{-1}(C_2)$. Often it is assumed that there is a dense orbit, and sometimes a
$G$-flow $(G,X)$ with a distinguished point $x\in X$ whose orbit is dense is called a $G$-ambit. Obviously, the type space $S_G(M)$ is a definable $G$-ambit. When every type in $S_G(M)$ is definable, $S_G(M)$ is the
universal definable $G$-ambit. See \cite{GPP} for details.

Now we study $\Pi(G)$ and $\Pi_\mathrm{s}(G)$, in the above definable settings. We write $\up(G)$ and $\ups(G)$ for the universal minimal proximal definable $G$-flow and the universal minimal strongly
proximal definable $G$-flow, respectively.
Both the universal objects $\up(G)$
$\ups(G)$ exist and are unique up to isomorphism. See Corollary 1.14 \cite{KP} for $\up(G)$ and \cite{G-Book} for the classical product construction. Note that products
and factors remain definable, so the same construction gives $\up_s(G)$.

Our first result shows that NIP removes the classical gap between these universal objects.

\begin{theorem}\label{up-ups}
Assume that $T$ is NIP. Every minimal proximal definable $G$-flow is strongly proximal.
In particular, $\up(G)=\ups(G)$.
\end{theorem}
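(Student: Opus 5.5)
The plan is to reduce to the classical theory of \emph{tame} flows and use the fact that, in tame flows, elements of the Ellis semigroup act on measures in a way that commutes with limits. Fix a minimal proximal definable $G$-flow $(G,X)$ and $\mu\in\Prob(X)$. We want a net $g_i\in G$ with $g_i\mu\to\delta_x$ in the weak$^*$ topology for some $x\in X$. The ``in particular'' clause then follows formally. Indeed, $\up_s(G)$ is minimal and strongly proximal, hence minimal proximal, so it is a factor of $\up(G)$. Conversely, $\up(G)$ is minimal strongly proximal by the first claim, hence a factor of $\up_s(G)$. Coalescence of these universal minimal objects then gives $\up(G)\cong\up_s(G)$.

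\textbf{Step 1 (NIP implies tameness).} Show that every definable $G$-flow in an NIP theory is tame. That is, for every continuous $f:X\to\R$ and every $x\in X$, the family $\{g\mapsto f(gx)\}$, or equivalently $\{f\circ g: g\in G\}\subseteq C(X)$, contains no independent sequence. I would proceed as follows.
\begin{enumerate}
\item Approximate $f$ by finitely many level sets.
\item Use definability of the orbit maps $f_x$ to separate the preimages of disjoint closed sets $\{f\le a\}$ and $\{f\ge b\}$ by a definable $D\subseteq G$.
\item Observe that an independent sequence $(f\circ g_n)$ would then yield, via the sets $g_n^{-1}D$ (a single formula $\varphi(x;y)$ with parameters $g_n$), an independent family of instances of one formula. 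This contradicts NIP.
\end{enumerate}
Some care is needed because $D$ depends on the point $x$. I would handle this by first passing to the universal ambit, so that everything happens inside $S_G(M^{\mathrm{ext}})$ or on the orbit closure of one point. Tameness passes to factors and subflows, and a minimal flow is the orbit closure of any point, so it suffices to work with a single $x$.

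\textbf{Step 2 (Ellis semigroup of a tame flow).} By the Bourgain--Fremlin--Talagrand dichotomy (in Köhler/Glasner's form), the Ellis semigroup $E(X)$ of a tame flow consists of universally measurable maps $X\to X$. Moreover, for each $p\in E(X)$ and each $\mu$, the pushforward $p_*\mu$ is a weak$^*$ limit of $g_i\mu$ along a net $g_i\to p$. For continuous $f$ one needs $\int f\circ g_i\,d\mu\to\int f\circ p\,d\mu$. In the metrizable/Rosenthal-compact setting this comes from sequential approximation plus dominated convergence. In general I would apply it to the Rosenthal compact closure of $\{f\circ g\}$ for each fixed $f$ and use the BFT fact that integration against $\mu$ is continuous on such closures. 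This is the main obstacle: handling non-metrizable $X$ and making the convergence simultaneous over all $f$. The fallback is to take a subnet and use compactness of $\Prob(X)$ together with the identity $\lim_i\int f\circ g_i\,d\mu=\int f\circ p\,d\mu$, proved for each $f$ separately.

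\textbf{Step 3 (proximality gives constant maps).} In a minimal proximal flow, every element $p$ of a minimal left ideal $I\subseteq E(X)$ is constant. The reason is that for any $x,y$ the pair $(px,py)$ lies in the minimal subset of $X\times X$ through a proximal pair, which must be on the diagonal. So $p(X)=\{x_0\}$, and hence $p_*\mu=\delta_{x_0}$. Combined with Step 2, this gives $g_i\mu\to\delta_{x_0}$, which is strong proximality.
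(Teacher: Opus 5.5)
Your Steps 1 and 3 are sound. In Step 1 a single point $x$ with dense orbit does suffice: approximate a witness $y$ by some $hx$, separate the level sets of $f_x$ by a definable $D$, and the sets $\{h: g_nh\in D\}$ become an independent sequence of instances of one formula. In Step 3, for a minimal left ideal $I$ the set $I(x,y)$ is a minimal subset of $X\times X$ consisting of proximal pairs, so it meets, and hence lies in, the diagonal.

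\textbf{The gap is Step 2, which is where the whole content of the theorem sits.} What you need is this: if $(g_i)$ is a net with $g_iy\to x_0$ for every $y\in X$, then $\int f\circ g_i\,d\mu\to f(x_0)$ for every $f\in C(X)$. Pointwise convergence of a \emph{net} gives no control of integrals. On $[0,1]$ with Lebesgue measure, index by finite sets $F$ and take continuous $0\le h_F\le 1$ vanishing on $F$ and equal to $1$ off an open set of measure at most $1/2$; then $h_F\to 0$ pointwise while $\int h_F\ge 1/2$.

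Passing to sequences (K\"ohler's theorem plus the Fr\'echet--Urysohn property of Rosenthal compacta) and then using dominated convergence works only for metrizable $X$. The theorem, however, is stated for arbitrary $M$, and definable flows over uncountable models are typically non-metrizable (type spaces over $\Q_p$ are not even second countable). Your fallback is circular. The per-$f$ identity $\lim_i\int f\circ g_i\,d\mu=\int f\circ p\,d\mu$ is precisely the missing statement. By contrast, ``simultaneity over $f$'' is a non-issue, since weak$^*$ convergence is tested one $f$ at a time. To finish along your route you would have to state precisely, and cite, a Bourgain--Fremlin--Talagrand or Talagrand-stability type theorem. It would need to guarantee that integration against every Radon measure is continuous on the pointwise closure of a bounded family in $C(X)$ without independent sequences. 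As written, this step is asserted rather than proved.

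The paper sidesteps all of this by working on the type space, where NIP is available in its uniform (VC) form rather than merely as the absence of independent sequences. The steps are:
\begin{enumerate}
\item It lifts $\mu$ to $\nu\in\Prob(Y)$ on a minimal subflow $Y\subseteq S_G(M)$ along the surjection $\rho:p\mapsto px_0$ (Lemma~\ref{lemma-lift}).
\item It chooses a formula $\varphi$ with $\rho^{-1}(\cl(V))\subseteq Y\cap[\varphi]\subseteq\rho^{-1}(U)$.
\item It applies the $\epsilon$-approximation theorem (Fact~\ref{fact-NIP}) to the formula $\theta(x;y)$ given by $y\in G\wedge\varphi(yx)$. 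This yields finitely many $p_1,\dots,p_m\in Y$ controlling $\nu(\theta(x;g))$ uniformly in $g$.
\item Proximality is then needed only for the finitely many points $\rho(p_j)$ (Fact~\ref{fact-2}), producing one $g$ with $g\mu(U)>1-\epsilon$.
\end{enumerate}
That finite averaging is exactly the substitute for the limit--integral interchange your Step 2 lacks. It requires no Ellis semigroup or measurability theory.
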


Our main applications concern groups definable in $\Q_p$, the field of $p$-adic numbers, in the language of fields. It is well-known that the theory $\pCF=\Th(\Q_p)$ is NIP.
Moreover, over the standard model $\Q_p$, every type is definable by Delon’s theorem in \cite{Delon}, and  consequently, the definable and externally definable dynamical categories agree over $\Q_p$.

The explicit study of $p$-adic definable topological dynamics was initiated in \cite{PPY}. For $G=\SL_2(\Q_p)$,
Penazzi, Pillay, and Yao proved that
$S_{\PP^1,\mathrm{na}}(\Q_p)$,  the space
of non-algebraic types over $\Q_p$ on the projective line, is a minimal proximal definable $G$-flow. And they asked:

\begin{ques}[Question 4.9 \cite{PPY}]\label{Ques-PPY}
Work in $\pCF$.
\begin{enumerate}[(i)]
\item Is $S_{\PP^1,\mathrm{na}}(\Q_p)$ the universal minimal proximal definable $\SL_2(\Q_p)$-flow?
\item Is $S_{\PP^1,\mathrm{na}}(\Q_p)$  a strongly proximal $\SL_2(\Q_p)$-flow?
\end{enumerate}
\end{ques}

Theorem \ref{up-ups} answers the second question affirmatively. It remains for us to solve the first question. We will answer the first question affirmatively.
In fact, given an arbitrary $p$-adic definable group $G$, we have a method for computing $\up(G)=\ups(G)$.

The method is based on the structural theory of $p$-adic definable groups developed in \cite{PYZ} and \cite{YZ}. The structural theory gives a \emph{definably amenable-semisimple decomposition} of $G$ when $G$ is not definably amenable, i.e.,  a definable exact sequence 
\begin{equation}\label{decomp}
1\rightarrow D\rightarrow G \stackrel{\pi}{\rightarrow} S\rightarrow 1
\end{equation}
where $D$ is definably amenable and $S$ is a finite-index subgroup of $\Ss(\Q_p)$ with $\Ss$ a connected semisimple algebraic group which is an almost product of almost $\Q_p$-simple, $\Q_p$-isotropic algebraic groups over $\Q_p$. In addition, we let $S$ be trivial and $D=G$, when $G$ is definably amenable. Note also that the decomposition is unique, in the sense that 
$D$ is unique up to commensurability. 

Then every definable $S$-flow $X$ can be treated as a definable $G$-flow via $\pi$. More specifically, we let $G$ act on $X$ as $gx=\pi(g)x$ for $g\in G$ and $x\in X$. By $X|_\pi$, we denote the induced $G$-flow. 
We will show that

\begin{theorem}\label{thm-reduce}
Assume that $T$ is NIP.
Let $G$ be a definable group. Let
$1\rightarrow D\rightarrow G \stackrel{\pi}{\rightarrow} S\rightarrow 1$ be a definable exact sequence where $D$ is definably amenable.
Then
$$\up(G)\cong \up(S)|_\pi.$$
\end{theorem}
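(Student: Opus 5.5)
The proof has two halves. First, $\up(S)|_\pi$ is a minimal proximal definable $G$-flow. Second, the normal definably amenable subgroup $D$ acts trivially on every minimal proximal definable $G$-flow, so that every such flow factors through $\pi$. Granting both, universality of $\up(S)$ among $S$-flows together with uniqueness of the universal object gives $\up(G)\cong\up(S)|_\pi$.

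For the first half, let $X=\up(S)$. Since $\pi$ is a surjective definable homomorphism, the $G$-orbits on $X|_\pi$ coincide with the $S$-orbits. Hence minimality and proximality transfer directly: proximality only asks for some group element bringing $x,y$ close, and $\pi$ is onto. For definability, take $x\in X$ and closed disjoint $C_1,C_2\subseteq X$. There is a definable $E\subseteq S$ separating $f_x^{-1}(C_1)$ from $f_x^{-1}(C_2)$, and then $\pi^{-1}(E)$ is a definable subset of $G$ separating the preimages under $g\mapsto \pi(g)x$. Conversely, suppose $Y$ is a $G$-flow on which $D$ acts trivially. Then $Y$ is an $S$-flow, and it is a definable $S$-flow: given a definable $E'\subseteq G$ separating the relevant preimages, the set $\pi(E')$ is a definable subset of $S$. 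This set separates, because the preimages are $D$-invariant, so $E'$ may be replaced by $E'D$ after checking that $E'D$ still separates. The plan is to verify this last point by noting that $f_y^{-1}(C_i)$ is a union of $D$-cosets. The cleaner route is to take $E'$ separating and use that $\pi(E')\supseteq \pi(f_y^{-1}(C_1))$ and $\pi(E')\cap\pi(f_y^{-1}(C_2))=\emptyset$, the latter following because $f_y^{-1}(C_2)$ is $D$-saturated.

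The main step is showing that $D$ acts trivially on a minimal proximal definable $G$-flow $Y$. By Theorem \ref{up-ups}, $Y$ is strongly proximal. Consider the closed subset $Y^D$ of $D$-fixed points. Since $D$ is normal in $G$, the set $Y^D$ is $G$-invariant, so by minimality it suffices to show $Y^D\neq\emptyset$.

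To produce a $D$-fixed point, use definable amenability of $D$. Fix $y\in Y$. The orbit map $f_y|_D$ is definable, so pushing forward a left-invariant Keisler measure $\mu$ on $D$ along $f_y$ yields a $D$-invariant regular Borel probability measure $\nu$ on $Y$. Definability of $f_y$ is exactly what makes this pushforward well defined on Borel sets via the separating definable sets; this is the technical point to check, most likely by passing through the type space $S_D(M)$ and using that $f_y$ extends continuously to it.

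By strong proximality there are $g_i\in G$ with $g_i\nu\to\delta_z$ for some $z\in Y$. Each $g_i\nu$ is $g_iDg_i^{-1}=D$-invariant, so the limit $\delta_z$ is $D$-invariant by continuity of the action on $\Prob(Y)$. Hence $z\in Y^D$, and therefore $Y^D=Y$.

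I expect the main obstacle to be the construction of $\nu$: transferring the Keisler measure to a genuine $D$-invariant Borel probability measure on $Y$, and not merely a finitely additive one on definable pieces. The approach is to define $\nu$ on continuous functions by $\phi\mapsto\int \phi\circ \bar f_y\,d\tilde\mu$. Here $\tilde\mu$ is the regular Borel measure on $S_D(M)$ induced by $\mu$, and $\bar f_y:S_D(M)\to Y$ is the continuous extension of $f_y$. The measure $\nu$ is then obtained by Riesz representation, and its invariance is checked on continuous functions.
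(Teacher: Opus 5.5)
Your proposal is correct and follows the paper's proof. The decisive step is exactly Lemma \ref{lemma-trivial-D}: a $D$-invariant measure pushed forward from $S_D(M)$ is moved to a Dirac measure by strong proximality, normality preserves $D$-invariance, and minimality spreads $\mathrm{Fix}(D)$ to all of $Y$; your factoring through $\pi$ with its definability check is the content of Lemma \ref{lemma-D}. The only differences are organizational: you show directly that $\up(S)|_\pi$ maps onto every minimal proximal definable $G$-flow instead of passing through $\up(G)/R_D$ and proving $R_D$ trivial, and you separate with $\pi(E')$ rather than $\{s\in S:\pi^{-1}(s)\subseteq U\}$, which is equally valid because the preimages are $D$-saturated.
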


In particular, if $G$ is a $p$-adic definable group with its definably amenable-semisimple decompostion (\ref{decomp}), $\up(G)\cong \up(S)|_\pi$. We then reduce the question to $S$.

Note that $S$ is of finite-index in $\Ss(\Q_p)$. It is not hard to show that there is no difference between studying $S$ and $\Ss(\Q_p)$ on this question. So for convenience, we assume that $S=\Ss(\Q_p)$. Let $\Pp$ be a minimal $\Q_p$-parabolic
subgroup. Then $\Ff:=\Ss/\Pp$ is a flag variety. Let $\F=\Ff(\Q_p)$. We let $\fullS_\F(\Q_p)$ be the subspace of the type space $S_\F(\Q_p)$ consisting of types of full dimension. Note that $S$ naturally acts on $\F$ and then naturally acts on $S_\F(\Q_p)$. It is easy to see that $\fullS_\F(\Q_p)$ is $S$-subflow.
We will prove that

\begin{theorem}\label{thm-S}
Working in $\pCF$. Let $G$ be a definable group with its definably amenable-semisimple decompostion (\ref{decomp}). Then, as definable $S$-flows, $$\up(S)\cong \fullS_\F(\Q_p).$$
\end{theorem}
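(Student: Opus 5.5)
We need to show two things: that $\fullS_\F(\Q_p)$ is a minimal proximal definable $S$-flow, and that it is universal, i.e.\ every minimal proximal definable $S$-flow is a factor of it. By Theorem \ref{up-ups}, it is enough to work with strong proximality on either side whenever that is convenient. Throughout we assume $S=\Ss(\Q_p)$, and we fix a minimal parabolic $P=\Pp(\Q_p)$ with Levi-type decomposition $P=ZU$. Here $Z$ is the centralizer of a maximal $\Q_p$-split torus $A$, and $Z/A$ is compact, so $P$ is definably amenable. We also fix the Iwasawa decomposition $S=KP$ with $K$ compact open.

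\textbf{Step 1: the flow $\fullS_\F(\Q_p)$ is minimal and proximal.}

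Definability is automatic, since every type over $\Q_p$ is definable by Delon's theorem. The flow is a closed $S$-invariant subset of $S_\F(\Q_p)$, because the action is by definable bijections that preserve dimension.

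For minimality, take any full-dimensional type $q$. Choose a regular element $a\in A^{-}$ contracting the opposite big cell $\bar U P/P$, an open dense definable subset of $\F$, toward the base point $eP$. The big-cell translates $k\bar UP/P$ with $k\in K$ cover $\F$, and a full-dimensional type lies in the open cell containing its realisations. So after translating by $k$ we may assume $q$ concentrates on $\bar U P/P\cong \bar U$. Then the sequence $a^n q$ converges to a type $q_0$ of full dimension lying in the type-definable infinitesimal neighbourhood of $eP$. We must check that this limit is unique, i.e.\ that it does not depend on $q$ up to the action of $K\cap P$. Granting this, the orbit of $q_0$ is dense in $\fullS_\F(\Q_p)$, since any nonempty open definable set of full dimension can be moved by $S$ to contain a small ball around $eP$.

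For proximality, given $q_1,q_2$, use the same contraction $a^n$ after a single $k$ that puts both types in the big cell. Generic position is achievable because the complement of $k\bar UP/P$ has lower dimension. Then $a^n q_1$ and $a^n q_2$ have the same limit.

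\textbf{Step 2: universality.}

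Let $X$ be a minimal proximal definable $S$-flow. By Theorem \ref{up-ups} it is strongly proximal. Restrict the action to $P$. Since $P$ is definably amenable, and in NIP definable amenability gives an invariant Keisler measure, hence an invariant measure on the definable $P$-flow, there is a $P$-invariant $\mu\in\Prob(X)$. Strong proximality together with minimality of $X$ gives an $S$-equivariant boundary map from $\Prob$ to $X$. We then take an $S$-limit of translates of $\mu$ that is a Dirac measure $\delta_x$, and argue that $x$ can be chosen $P$-fixed. The argument is to apply contraction by $A^-$, which normalises $U$, to a $P$-invariant measure; this keeps it invariant under a conjugate of $P$, and after conjugating back we obtain a $P$-fixed point $x_0$. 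This gives an $S$-map $\F\to X$, $gP\mapsto gx_0$.

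The main obstacle is that this map is not continuous, since $\F$ carries the $p$-adic topology while $X$ is only a definable flow. To handle it, extend by definability. For $q\in \fullS_\F(\Q_p)$, define $\Phi(q)$ as the limit in $X$ of $g_ix_0$, where $g_iP$ ranges along the ultrafilter $q$ of definable subsets of $\F$; the definability of the orbit map $f_{x_0}$, pulled back along $S\to\F$, makes this well defined. Concretely, pick a definable section and use that $f_{x_0}$ is definable, so that it extends continuously to $S_S(\Q_p)$, and then compose with a section of types. Showing that $\Phi$ is well defined on full-dimensional types, i.e.\ independent of the section, should use proximality together with the fact that full-dimensional types are "generic" in $U$-cosets. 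Once that is done, $\Phi$ is continuous and equivariant, and it is onto because $X$ is minimal.

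Finally, $\up(S)$ is minimal proximal and hence has a $P$-fixed point by the above. Combined with Step 1 and the universality of $\up(S)$, this gives $\up(S)\to\fullS_\F(\Q_p)\to\up(S)$. Any endomorphism of a minimal proximal flow is the identity, so both maps are isomorphisms.
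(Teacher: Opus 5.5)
\textbf{Main gap (Step 2).} Step 2 rests on the claim that every minimal proximal definable $S$-flow has a $P$-fixed point. This is false for definable flows.

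The classical argument you are imitating works because $S=KP$ gives $S\mu=K\mu$. For a continuous action this set is compact, so a Dirac limit equals $k\mu$ for a genuine $k\in K$ and can be pulled back. For a definable flow, the closure of $K\mu$ is only $S_K(\Q_p)\mu$, the Dirac limit is typically attained only at a non-realised type, and neither contraction by $A^-$ nor conjugation recovers a fixed point.

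Concretely, take $S=\SL_2(\Q_p)$ with $P$ the upper triangular Borel. The flow $X=\fullS_{\PP^1}(\Q_p)$ is minimal proximal (Penazzi, Pillay and Yao). Yet $h=\mathrm{diag}(p,p^{-1})\in P$ acts on the affine chart by $x\mapsto p^2x$, so it maps $\{x\neq 0: v(x)\equiv j \bmod 4\}$ onto $\{x\neq 0: v(x)\equiv j+2 \bmod 4\}$. Every non-algebraic type lies in exactly one of these four sets, so $h$ fixes no point of $X$. In particular, your closing sentence would give $\fullS_\F(\Q_p)$ itself a $P$-fixed point, which it does not have.

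The paper never looks for a fixed point in $Y$. It keeps the $P$-invariant $\mu\in\Prob(Y)$ and extends $gP\mapsto g\mu$ to a continuous $S$-map $\hat f:S_\F(\Q_p)\to\Prob(Y)$. This uses that, under NIP, $\Prob(Y)$ is again a definable flow, that all types over $\Q_p$ are definable, and that $\pCF$ has definable Skolem functions.

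The missing idea is a type $u\in S_S(\Q_p)$ that collapses the whole type space: $uz=\xi$ for all $z\in S_\F(\Q_p)$. It is built as $u=r*q$, where $r\in\Gen(K)$ satisfies $r\vdash\mu_S$ and $r*s=r$ for all $s\vdash\mu_S$, and $q$ is a type on $P$ with $P^{00}$-invariant global extension. The construction uses the fact that $P^{00}$-invariant types concentrate on $\mu_S P$.

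Then, if $g_i\mu\to\delta_{y_0}$ and $g_iP\to z_0$, one gets $\hat f(\xi)=\hat f(uz_0)=u\delta_{y_0}=\delta_{uy_0}$. Hence $\hat f$ sends $S_S(\Q_p)\xi=\fullS_\F(\Q_p)$ into the Dirac measures, and that is the factor map onto $Y$.

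\textbf{Second gap (Step 1, proximality).} Contracting by $a^n$ after a suitable $k$ only pushes both types into the infinitesimal neighbourhood of $o$. That neighbourhood carries many full-dimensional types, and $a^n$ does not merge them.

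For $\SL_2$, in the coordinate $y$ around $o$ the contraction is $y\mapsto p^{2n}y$, which preserves $v(y)\bmod 2$. If $kq_1$ and $kq_2$ have nonzero standard parts whose valuations differ in parity, then $\{v(y)\ \text{even}\}$ separates $a^nkq_1$ from $a^nkq_2$ for every $n$. So ``the same limit'' does not follow. In the paper, proximality comes for free from the same $u$: $ux=\xi=uy$ for all $x,y$.

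\textbf{Minimality is fine.} Your minimality argument is essentially sound and does not need the uniqueness of the limit that you flag. You must, however, choose $k$ so that the standard part of $kq$ (not merely $kq$) lies in the big cell. Then every limit point of $a^nkq$ lies in all definable neighbourhoods of $o$, and the orbit closure of any such type contains $\fullS_\F(\Q_p)$. The paper obtains minimality more directly, by realising $\fullS_\F(\Q_p)$ as the image of $\Gen(K)$.
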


Note that when $G=S=\SL_2(\Q_p)$, $\Ss=\SL_2$ and $\Pp<\Ss$ is the subgroup of upper triangular matrices, $\F=\PP^1$ and then
$\fullS_\F(\Q_p)=S_{\PP,\mathrm{na}}(\Q_p)$. Thus, we solved the second question of Question \ref{Ques-PPY}.

\

The paper is organized as follows. In Section \ref{Preliminaries}, we review some background on proximal dynamics. In Section \ref{NIP-up}, we prove Theorem \ref{up-ups}. In Section \ref{reduce}, we prove Theorem \ref{thm-reduce}. In
Section \ref{p-adic}, we prove Theorem \ref{thm-S}.

For notation, we always let $T$ be a complete theory and $M$ be a structure in $T$. A definable set $D$ is a set defined in $M$ with parameters from $M$. We identify a definable set with its defining formula, so $D=D(M)$. We assume basic familiarity with model theory, and our model-theoretic terminology is standard. When studying groups defined over $\Q_p$,  an algebraic group $\Gg$ is treated as an algebraic variety, which is usually studied in a very saturated algebraically closed field $\Omega$ containing $\Q_p$. Our terminology for algebraic groups is standard. Unless otherwise stated, all algebraic groups in this paper are connected.
Note that $\Gg(\Q_p)$ is a definable group in the pure field structure $\Q_p$, which is sometimes also called an $\Q_p$-algebraic group. We refer the reader to \cite{PYZ} and \cite{YZ} for properties of 
$p$-adic definable groups, in particular some special $\Q_p$-algebraic groups.

\section{Definable proximal dynamics}\label{Preliminaries}

We review some basic definitions. 

\begin{definition}
Let $G$ be a definable group, and $X$ a compact Hausdorff space. Let $G$ act on $X$. Then we call $X$ a $G$-flow. The flow $(G,X)$ is definable if for every $x\in X$, the map $f_x:G\rightarrow X:g\mapsto gx$ is  a definable map, that is, for any disjoint closed subsets $C_1,C_2$ of $X$, there exists a definable subset $D$ of $G$ separating $f_x^{-1}(C_1)$ and $f_x^{-1}(C_2)$.
\end{definition}

When $X$ is a definable $G$-flow and $x\in X$, the map $f_x:G\rightarrow X:g\mapsto gx$ can be extended to $\hat{f_x}:S_G(M)\rightarrow X:p\mapsto \lim_i f_x(g_i)$ where $g_i\in G$ (treated as realized types) converges to $p$. We write $p x$ for $\hat{f_x}(p)$ in short.  If every type
in $S_G(M)$) is definable, then $S_G(M)$  is the universal definable $G$-ambit (Proposition 3.8 \cite{GPP});
without that hypothesis it need not be a definable flow. 

Let $X$ be a compact Hausdorff space. Let $C(X)$ be the space of all continuous functions $X\rightarrow \R$. 
Let $\Prob(X)$ be the space of regular Borel probability measures. Then $\Prob(X)$ can be treated as a subspace of the dual of $C(X)$ with the weak-* topology. More specifically,  a net $(\mu_i)_i$ of $\Prob(X)$ converges to  $\mu\in \Prob(X)$ iff $\int_X f \text{ } \mathrm{d}\mu_i$ converges to $\int_X f \text{ } \mathrm{d}\mu$ for every $f\in C(X)$. Also note that $G$ naturally acts on $\Prob(X)$ and $\Prob(X)$ is a $G$-flow.

\begin{definition}
Let $G$ be a group and $X$ be a $G$-flow.
\begin{enumerate}
\item A ($G$-)subflow $Y$ of $X$ is a closed subset which is closed under the $G$-action. $X$ is minimal if there is no nonempty proper subflow.
\item Let $x,y\in X$. We say that $x,y$ are proximal if there is a net $(g_i)_i$ of $G$, such that $\lim_i g_i x=\lim_i g_i y$. $X$ is called proximal if any $x,y\in X$ are proximal.
\item $X$ is strongly proximal if for every $\mu\in \Prob(X)$, the weak-* closure of the orbit $G \mu$ contains a Dirac measure.
\end{enumerate}
\end{definition}

The universal minimal proximal definable $G$-flow $\up(G)$ and  universal minimal strongly proximal definable $G$-flow
$\ups(G)$ exist and are unique up to isomorphism. 
See Corollary 1.14 \cite{KP} for $\up(G)$. Also see \cite{G-Book} for the classical product construction. Note that products
and factors remain definable, so the same construction gives $\up_s(G)$.

We now recall two results about minimal proximality.

\begin{fact}[II Lemma 4.1 \cite{G-Book}]\label{fact-1}
Let $X$ be a minimal proximal flow. Then every endomorphism of $X$ is the identity automorphism..
\end{fact}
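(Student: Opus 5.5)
Fix a minimal proximal flow $X$ and an endomorphism $\varphi\colon X\to X$, i.e.\ a continuous map with $\varphi(gx)=g\varphi(x)$ for all $g\in G$ and $x\in X$. I will show $\varphi=\mathrm{id}_X$; since $\varphi$ is then a bijection, the statement that it is an automorphism follows. The approach is to combine proximality with minimality through the orbit closure of a diagonal point in $X\times X$.

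First, for $x\in X$ consider the pair $(x,\varphi(x))\in X\times X$. By proximality of $X$ there is a net $(g_i)_i$ in $G$ and a point $z\in X$ with $\lim_i g_ix=z$ and $\lim_i g_i\varphi(x)=z$. By equivariance $g_i\varphi(x)=\varphi(g_ix)$, and continuity of $\varphi$ gives $\lim_i\varphi(g_ix)=\varphi(z)$. Uniqueness of limits in the Hausdorff space $X$ then yields $\varphi(z)=z$, so the fixed-point set $F=\{y\in X:\varphi(y)=y\}$ is nonempty.

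Next, $F$ is closed, being the equalizer of the two continuous maps $\varphi$ and $\mathrm{id}_X$ into a Hausdorff space. It is also $G$-invariant: if $\varphi(y)=y$ then $\varphi(gy)=g\varphi(y)=gy$. So $F$ is a nonempty subflow, and minimality of $X$ forces $F=X$, that is, $\varphi=\mathrm{id}_X$.

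No step here is a genuine obstacle. The only point needing care is the first step, which uses both proximality and equivariance to pass to a common limit. Definability plays no role, so the classical argument applies verbatim.
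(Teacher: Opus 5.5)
Your argument is correct: proximality of $x$ and $\varphi(x)$, combined with equivariance and continuity of $\varphi$, produces a fixed point, and the fixed-point set is a nonempty closed $G$-invariant subset, hence equal to $X$ by minimality. The paper gives no proof of its own and only cites Glasner's Lemma II.4.1; as far as I recall, the standard proof there is essentially this same fixed-point argument.
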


\begin{fact}[II Propostion 2.1 \cite{G-Book}]\label{fact-2}
Let $X$ be a minimal proximal $G$-flow. Let $x_1,\dots, x_n,z\in X$. Then for every neighborhood  $V$ of $z$, there is $g\in G$ such that $g x_j\in V$ for every $j$.
Equivalently, there is a net $(g_i)_i$ of $G$ such that $\lim_i g_i x_j=z$ for every $j$.
\end{fact}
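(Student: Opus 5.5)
The plan is to pass to the enveloping semigroup and argue by induction on $n$, collapsing the points $x_1,\dots,x_n$ one at a time. Let $E=E(X)$ be the closure of (the image of) $G$ in $X^X$ with the product topology, i.e.\ the topology of pointwise convergence. I will use the standard facts that $E$ is compact, and that it is a semigroup under composition: for fixed $q\in E$ the map $p\mapsto qp$ is continuous, and left multiplication by elements of $G$ is continuous since each $g$ acts by a homeomorphism. The key point of working in $E$ is this: if $p\in E$, then there is a net $(g_k)_k$ in $G$ with $g_k\to p$ pointwise. Along such a net, $g_kx_j\to px_j$ holds \emph{simultaneously} for all finitely many $j$.

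\textbf{Step 1 (collapsing in $E$).} I claim that for any $x_1,\dots,x_n\in X$ there is $r\in E$ with $rx_1=\dots=rx_n$. I will prove this by induction on $n$, the case $n=1$ being trivial (take $r$ the identity).
\begin{itemize}
\item Suppose $p\in E$ satisfies $px_1=\dots=px_{n-1}=w$, and put $w'=px_n$.
\item By proximality of $X$ there is a net $(h_i)_i$ in $G$ with $\lim_i h_iw=\lim_i h_iw'$.
\item By compactness of $E$, pass to a subnet so that $h_i\to q\in E$. Then $qw=qw'$.
\item Set $r=qp\in E$. Then $rx_j=qw$ for $j<n$ and $rx_n=qw'=qw$, as required.
\end{itemize}

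\textbf{Step 2 (moving the common point near $z$).} Let $u=rx_1$ be the common value. By minimality, the orbit $Gu$ is dense in $X$, so there is $h\in G$ with $hu\in V$. Since $h$ acts as a homeomorphism, $W:=h^{-1}V$ is an open neighbourhood of $u$.

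\textbf{Step 3 (approximating $r$ by group elements).} Choose a net $(g_k)_k$ in $G$ converging pointwise to $r$. Then $g_kx_j\to u$ for every $j\le n$ simultaneously. So for some $k$ we have $g_kx_j\in W$ for all $j$, and $g:=hg_k\in G$ satisfies $gx_j\in V$ for all $j$.

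\textbf{The net formulation.} The equivalent net statement follows by indexing over the neighbourhoods $V$ of $z$, directed by reverse inclusion, and choosing such a $g_V$ for each $V$.

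\textbf{Main obstacle.} The only delicate point is the induction in Step 1. Proximality applies only to pairs, and the net witnessing it for the pair $(w,w')$ must be composed with the earlier collapsing map $p$ without losing the previous coincidences. This is exactly why I work in $E(X)$: there, limits of nets compose as $qp$ and act on finitely many points at once. Continuity of $p\mapsto qp$ is not actually needed, only that $E$ is closed under composition. That closure follows because $qp$ is the pointwise limit of $q\circ g$ along $g\to p$, and each $q\circ g$ is in turn a limit of elements $h\circ g$ with $h\in G$.
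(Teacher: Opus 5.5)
Your strategy is the standard proof of this fact; the paper itself gives no proof and only cites Glasner. You collapse $x_1,\dots,x_n$ to a single point by an element of the enveloping semigroup, by induction on $n$, applying proximality to one pair at a time. Then minimality, together with pointwise approximation of that element by group elements, moves all the images into $V$. Steps 1--3 and the passage to nets are correct, provided $E$ is closed under composition.

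That closure property is true, but the justification you give for it is wrong in two places.

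\emph{The continuity claim.} For a fixed $q\in E$ the map $p\mapsto qp$ is in general \emph{not} continuous, because $q$ need not be a continuous self-map of $X$. What does hold is:
\begin{itemize}
\item $q\mapsto qp$ is continuous for every fixed $p\in X^X$, since $(qp)(x)=q(p(x))$ is evaluation of $q$ at a single point;
\item $p\mapsto hp$ is continuous for $h\in G$.
\end{itemize}

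\emph{The limit claim.} For the same reason, $qp$ need not be the pointwise limit of $q\circ g_k$ when $g_k\to p$: the convergence $q(g_kx)\to q(px)$ would require $q$ to be continuous at $px$. For a concrete failure, let $\mathbb{Z}$ act on $\mathbb{R}\cup\{\infty\}$ by $T(x)=2x$, and put $q=\lim_n T^n$ and $p=\lim_n T^{-n}$.
\begin{itemize}
\item For every $k$ we have $q\circ T^{-k}=q$, which sends every $x\neq 0$ to $\infty$.
\item By contrast, $qp$ sends every $x\neq\infty$ to $0$.
\end{itemize}
Your closing remark that only closure under composition is needed is right, but the order of limits must be reversed:
\begin{enumerate}
\item For $h\in G$, $hp=\lim_k hg_k\in E$, because $h$ is a homeomorphism.
\item For a net $h_i\to q$ in $G$, $qp=\lim_i h_ip\in E$, by continuity of $f\mapsto fp$.
\end{enumerate}
With this correction your proof is complete.
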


\section{Proximality in NIP theories}\label{NIP-up}

We first give a lemma that lifts surjective morphisms between flows to the corresponding spaces of probability measures.

\begin{lemma}\label{lemma-lift}
Let $\rho:Y\rightarrow X$ be a continuous surjection between compact Hausdorff spaces. Let $\rho_*: \Prob(Y)\rightarrow \Prob(X): \mu\mapsto \rho_*(\mu)$ be the pushforward map, i.e., $\int_Y f \text{ }\mathrm{d}(\rho_*\mu)=\int_X f\circ\rho \text{ }\mathrm{d} \mu$ for every $f\in C(X)$. Then $\rho_*$ is surjective. 
\end{lemma}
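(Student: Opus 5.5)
The plan is to first show that the image of $\rho_*$ contains every Dirac measure and every finitely supported probability measure. Then I will combine a density argument with the compactness of $\Prob(Y)$ and the continuity of $\rho_*$.

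First I check that $\rho_*$ is well defined and continuous for the weak-* topologies. For $f\in C(X)$ we have $f\circ\rho\in C(Y)$. The functional $f\mapsto\int_Y f\circ\rho\,\mathrm{d}\mu$ is positive, linear, and sends $1$ to $1$, so by the Riesz representation theorem it is given by a unique regular Borel probability measure on $X$. This measure is $\rho_*\mu$. If $\mu_i\to\mu$, then for every $f\in C(X)$,
$$\int_X f\,\mathrm{d}(\rho_*\mu_i)=\int_Y f\circ\rho\,\mathrm{d}\mu_i\to\int_Y f\circ\rho\,\mathrm{d}\mu=\int_X f\,\mathrm{d}(\rho_*\mu),$$
so $\rho_*$ is continuous. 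Since $\Prob(Y)$ is weak-* compact (Banach--Alaoglu together with closedness of the positivity and normalization conditions) and $\Prob(X)$ is Hausdorff, the image $\rho_*(\Prob(Y))$ is a compact, hence closed, subset of $\Prob(X)$.

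Next I show the image contains the finitely supported measures. Given $x\in X$, surjectivity of $\rho$ gives $y$ with $\rho(y)=x$, and then $\rho_*\delta_y=\delta_x$. By linearity of pushforward on convex combinations,
$$\rho_*\Bigl(\sum_k t_k\delta_{y_k}\Bigr)=\sum_k t_k\delta_{x_k}.$$
So every finitely supported probability measure on $X$ lies in the image.

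The remaining step, which is the only real content, is that finitely supported probability measures are weak-* dense in $\Prob(X)$. I would prove this directly. Take $\mu\in\Prob(X)$, finitely many functions $f_1,\dots,f_n\in C(X)$, and $\varepsilon>0$. Each $x\in X$ has an open neighborhood $U_x$ on which every $f_j$ varies by less than $\varepsilon$. By compactness, finitely many of these neighborhoods, say $U_1,\dots,U_m$, cover $X$. Refine this cover to a Borel partition $B_k=U_k\setminus\bigcup_{l<k}U_l$ and pick $x_k\in B_k$ whenever $B_k$ is nonempty. Then $\nu=\sum_k\mu(B_k)\delta_{x_k}$ satisfies $\bigl|\int f_j\,\mathrm{d}\nu-\int f_j\,\mathrm{d}\mu\bigr|\le\varepsilon$ for every $j$. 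Alternatively, one can cite Krein--Milman, since the extreme points of $\Prob(X)$ are exactly the Dirac measures.

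Putting the steps together, the image of $\rho_*$ is closed and contains a dense subset of $\Prob(X)$, so it equals $\Prob(X)$. This proves that $\rho_*$ is surjective.
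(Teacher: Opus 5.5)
Your proof is correct and follows the same route as the paper. In both, the image of $\rho_*$ is compact, contains all Dirac measures and their convex combinations, and these are weak-* dense in $\Prob(X)$. The paper simply cites that $\Prob(X)$ is the closed convex hull of the Dirac measures. You prove this density directly (or via Krein--Milman), and you also check the continuity of $\rho_*$ that the paper leaves implicit.
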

\begin{proof}
The image of $\rho_*$ is compact and convex, and it contains every Dirac measure. Since  the closed convex hull of the Dirac measures $\Prob(X)$, the image of $\rho_*$ is the whole $\Prob(X)$.
\end{proof}

We now point out the key role of the NIP assumption. Note that a Keisler measure over $M$ (in the variable $x$) is a finitely additive probability measure on the Boolean algebra of definable sets (in the variable $x$). Moreover, every Keisler measure over $M$ can be uniquely extend to a regular Borel probability measure on $S_x(M)$. So there is a one-to-tone correspondence between the set of Keisler measures over $M$ and the set of regular Borel probability measures on $S_x(M)$. And types can be viewed as Dirac measures on $S_x(M)$. See \cite{Simon} for details. In NIP theories, measures can be approximated by averages of types.

\begin{fact}[Proposition 7.11 \cite{Simon}]\label{fact-NIP}
Assume that $T$ is NIP. Let $\mu$ be a Keisler measure over $M$. Let $\theta(x;y)$ be a formula and $\epsilon>0$. Then there are types $p_1,\dots ,p_m\in S_x(M)$ such that, for every $b\in M$,
$$|\mu(\theta(x;b))-\frac{|\{i:\theta(x;b)\in p_i\}|}{m}|<\epsilon.$$
\end{fact}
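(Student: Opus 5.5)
This is Proposition 7.11 of \cite{Simon}, so within the paper we simply cite it. If I had to prove it, I would use the probabilistic method together with the Vapnik--Chervonenkis uniform law of large numbers. The plan has four steps.

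\emph{Step 1: pass to a genuine probability space.} Extend the Keisler measure $\mu$ to its unique regular Borel probability measure on $S_x(M)$, as recalled above. Consider the family of clopen sets
$$\mathcal{C}_\theta=\{[\theta(x;b)] : b\in M\}$$
of subsets of $S_x(M)$. Since $T$ is NIP, $\theta(x;y)$ has finite VC-dimension $d$, and hence so does $\mathcal{C}_\theta$ on $S_x(M)$. Indeed, a set of types $\{q_1,\dots,q_n\}$ shattered by $\mathcal{C}_\theta$ can be replaced by realizations $a_1,\dots,a_n$ in an elementary extension, using compactness on finitely many instances of $\theta$. By the Sauer--Shelah lemma, the shatter function of $\mathcal{C}_\theta$ is then bounded by $O(n^d)$.

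\emph{Step 2: random sampling.} Draw $p_1,\dots,p_m$ i.i.d.\ according to $\mu$. For fixed $b$, the quantity $\frac{1}{m}|\{i:\theta(x;b)\in p_i\}|$ is the empirical frequency of the set $[\theta(x;b)]$. The VC theorem, proved by double sampling, symmetrization and Hoeffding's inequality, gives
$$\Pr\Bigl(\sup_{b\in M}\Bigl|\mu(\theta(x;b))-\tfrac{1}{m}|\{i:\theta(x;b)\in p_i\}|\Bigr|\geq\epsilon\Bigr)\leq 4\,\pi_{\mathcal{C}_\theta}(2m)\,e^{-\epsilon^2 m/8}.$$
The key point is the symmetrization step. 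After conditioning on a double sample of $2m$ types, only the traces of $\mathcal{C}_\theta$ on those $2m$ points matter, and there are at most $\pi_{\mathcal{C}_\theta}(2m)=O(m^d)$ of them. A union bound over these traces, each controlled by Hoeffding's inequality, yields the displayed estimate.

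\emph{Step 3: conclude.} The bound is polynomial in $m$ times an exponentially decaying factor, so it tends to $0$. For $m$ large it is $<1$, and therefore some outcome $(p_1,\dots,p_m)$ satisfies the required uniform inequality for every $b\in M$.

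\emph{Main obstacle: measurability.} The supremum is over a possibly uncountable parameter set $M$, so the event in Step 2 need not be Borel in $S_x(M)^m$. I would handle this in one of two ways. The first is to run the argument with outer probability: the symmetrization step only ever uses finitely many traces on a given finite sample, so it goes through verbatim. The second is to avoid measure theory entirely. Here one bounds the \emph{average} over the sample space of the number of ``bad'' traces, a finite combinatorial quantity for each double sample. This yields the existence of a good tuple directly, since a tuple outside the bad set exists whenever the bad set has outer measure $<1$. A secondary check is that the finitely additive Keisler measure and its Borel extension agree on each $[\theta(x;b)]$. This is immediate because these sets are clopen, so no approximation error enters there.
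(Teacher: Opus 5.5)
Your proposal is correct and takes essentially the paper's route: the paper gives no argument of its own but cites Simon's Proposition 7.11, whose proof is this same application of the VC theorem to the clopen family $\{[\theta(x;b)]:b\in M\}$ on $(S_x(M),\mu)$, with NIP supplying finite VC-dimension. The measurability issue is milder than you suggest: for each $b$ the map $(p_1,\dots,p_m)\mapsto|\mu(\theta(x;b))-\frac{1}{m}|\{i:\theta(x;b)\in p_i\}||$ is locally constant, so the bad events are unions of clopen sets, hence open and measurable for the Radon product measure (where Tonelli holds for lower semicontinuous functions), and no outer-probability detour is needed.
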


Now we prove Theorem \ref{up-ups}.

\begin{theorem}\label{thm-1}
Assume that $T$ is NIP. Let $X$ be a minimal proximal definable $G$-flow. Let $\mu\in \Prob(X)$ and $z\in X$. Then $\delta_z$, the Dirac measure for $z$, is in the weak-* closure of $G\mu$. 
Hence, $X$ is strongly proximal.
\end{theorem}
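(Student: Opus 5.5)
Fix a point $x_0\in X$ and consider the orbit map $f=f_{x_0}:G\to X$. Since $X$ is a definable $G$-flow, $f$ extends to a continuous map $\hat f:S_G(M)\to X$, $p\mapsto px_0$. By minimality $Gx_0$ is dense, so $\hat f$ is surjective. By Lemma \ref{lemma-lift}, $\hat f_*:\Prob(S_G(M))\to\Prob(X)$ is also surjective. Hence we may choose a Keisler measure $\nu$ on $G$ over $M$ with $\hat f_*\nu=\mu$.

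The plan is to approximate $\mu$ weak-* by averages of Dirac measures $\frac1m\sum_{i}\delta_{p_ix_0}$, uniformly under translation by $G$. Then we apply Fact \ref{fact-2} to the finitely many points $p_ix_0$ to move them all near $z$.

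\textbf{Step 1 (approximation).} Fix $h_1,\dots,h_k\in C(X)$ and $\epsilon>0$. It suffices to find $g\in G$ with $|\int h_l\,d(g\mu)-h_l(z)|<\epsilon$ for all $l$.

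\begin{itemize}
\item We have $\int h_l\,d(g\mu)=\int_{S_G(M)} h_l(g\,\hat f(p))\,d\nu(p)$.
\item Approximate each $h_l$ by a simple function subordinate to a finite open cover of $X$ of small oscillation.
\item Using definability of $f$, replace the preimages of the cover pieces by definable sets $D_1,\dots,D_r\subseteq G$. These separate the relevant closed sets.
\item The set $\{q\in S_G(M): gq\in \text{piece}\}$ is then controlled, up to small boundary error, by the translated sets $g^{-1}D_j$. These are instances of the formulas $\theta_j(x;y):= yx\in D_j$ with parameter $y=g^{-1}$.
\end{itemize}

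Now apply Fact \ref{fact-NIP} to $\nu$ and the finitely many formulas $\theta_j(x;y)$, combined into one formula by the usual coding trick. This gives types $p_1,\dots,p_m$ such that for every $g\in G$,
$$\Big|\int h_l\,d(g\mu)-\frac1m\sum_i h_l(g\,p_ix_0)\Big|<\epsilon/2 .$$

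\textbf{Step 2 (contraction).} Apply Fact \ref{fact-2} to the points $x_i=p_ix_0$ and $z$, using the neighborhood $V=\{y:|h_l(y)-h_l(z)|<\epsilon/2\ \forall l\}$. This gives $g$ with all $gx_i\in V$. Then $|\int h_l\,d(g\mu)-h_l(z)|<\epsilon$, so $\delta_z\in\overline{G\mu}$. Since $\mu$ was arbitrary, $X$ is strongly proximal.

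\textbf{Main obstacle.} The hard part is Step 1. Fact \ref{fact-NIP} controls measures of definable sets in $G$. We need control of integrals of continuous functions on $X$ after translation, uniformly in $g$.

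The subtlety is that $\hat f^{-1}$ of a closed set is not clopen. So one must sandwich: for $h$ and threshold $t$, pick $t<t'$ and a definable $D$ separating $f^{-1}(h\le t)$ from $f^{-1}(h\ge t')$. Then layer-cake over a fine grid of thresholds to get $\int h\circ(g\cdot)\circ\hat f\,d\nu$ within the grid mesh of a sum of $\nu(g^{-1}D_j)$'s. The same estimate holds for the averaged type measure.

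Here we use $g\hat f(q)=\hat f(gq)$. This holds because $\hat f$ is the continuous extension of an equivariant map on the dense set $G$, with left translation on $S_G(M)$ continuous. Left translation of types is always well-defined, so this is fine.

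Uniformity in $g$ is exactly what the parameter $y$ in Fact \ref{fact-NIP} supplies. This is where NIP is essential.
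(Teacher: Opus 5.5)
Your proof is correct and follows essentially the same route as the paper's. You lift $\mu$ to a Keisler measure via Lemma~\ref{lemma-lift}, use Fact~\ref{fact-NIP} with the translating group element as the parameter to replace that measure uniformly by an average of finitely many types, and then move all their images near $z$ at once with Fact~\ref{fact-2}. The only difference is that the paper skips your layer-cake approximation of continuous test functions: it checks only $g\mu(U)>1-\epsilon$ for neighborhoods $U$ of $z$, using one formula $\varphi$ whose clopen set lies between the preimages of $\cl(V)$ and $U$, and this already gives convergence to $\delta_z$.
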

\begin{proof}
Choose a minimal subflow $Y\subseteq S_G(M)$ and a point $x_0\in X$.  Then map $f_{x_0}: G\rightarrow X$ induces a map $\hat{f_{x_0}}:S_G(M)\rightarrow X$. Let $\rho$ be the restriction of $\hat{f_{x_0}}$ on $Y$. Since $X$ is minimal, $\rho$ is surjective. By Lemma \ref{lemma-lift}, there exists $\nu\in \Prob(Y)$ such that $\rho_*(\nu)=\mu$.

Fix a neighborhood $U$ of $z$ and $\epsilon>0$. By regularity of compact Hausdorff spaces, choose an
open neighborhood $V$ of $z$ with the closure $\cl(V)\subseteq U$. Then there is an $M$-formula $\varphi$, such that $\rho^{-1}(\cl(V)) \subseteq Y\cap[\varphi] \subseteq \rho^{-1}(U)$ where $[\varphi]=\{p\in S_G(M): p\vdash \varphi\}$, a clopen subset of $S_G(M)$. 

Regard $\mu$ as a Keisler measure over $M$ by putting $\mu(\psi)=\mu(Y\cap[\psi])$ for every formula $\psi$. Apply Fact \ref{fact-NIP} to $\theta(x;y):= \text{``} y\in G\text{''} \wedge \varphi(yx)$. $\theta(x;y)$. There are $p_1,\dots, p_m\in S_G(M)$ such that, for every $b\in M$,
$$|\nu(\theta(x;b))-\frac{|\{i:\theta(x;b)\in p_i\}|}{m}|<\epsilon.$$
Of course, we can in addition assume that $p_1,\dots, p_m\in Y$.

By Fact \ref{fact-2}, there is $g\in G$ such that $g \rho(p_j)\in V$ for every $j=1,\dots,m$.
As $g\rho(p_j)=\rho(gp_j)$ and $\rho^{-1}(\cl(V)) \subseteq Y\cap[\varphi] \subseteq \rho^{-1}(U)$, we have that $g p_j\vdash \varphi$ for every $j$ and consequently, $\frac{|\{i:\theta(x;g)\in p_i\}|}{m}=1$. Then
$$\nu(Y\cap[\varphi])=\nu(\theta(x,g))>1-\epsilon.$$
Since $\rho^{-1}(U)\supseteq Y\cap[\varphi]$, we have 
$$g\mu(U) =\nu( \{p\in Y: \rho(gp)\in U\}\geq \nu(Y\cap[\varphi])>1-\epsilon.$$

Name the above $g$ by $g(U,\epsilon)$. Direct the pairs $(U,\epsilon)$ by shrinking $U$ to $z$ and letting $\epsilon$ tend to zero.
Then the net $(g(U,\epsilon))_{U,\epsilon}$ ensures $\lim_{U,\epsilon} g(U,\epsilon) \mu=\delta_z$.
\end{proof}

\begin{coro}
Assume that $T$ is NIP. Then $\up(G)=\ups(G)$.
\end{coro}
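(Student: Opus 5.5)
We need to show $\up(G)\cong\ups(G)$ under NIP, and the plan is to exhibit each as a factor of the other and then use Fact \ref{fact-1} to identify them.

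\textbf{Step 1: $\up(G)$ is a minimal strongly proximal definable $G$-flow.} By Theorem \ref{thm-1}, $\up(G)$ is strongly proximal, and it is definable and minimal by construction. By the universal property of $\ups(G)$, there is therefore a surjective $G$-morphism $\alpha:\ups(G)\to\up(G)$.

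\textbf{Step 2: $\ups(G)$ is a minimal proximal definable $G$-flow.} Strong proximality implies proximality. To see this, let $x,y\in\ups(G)$ and apply strong proximality to $\mu=\frac12(\delta_x+\delta_y)$. This gives a net $g_i$ with $g_i\mu\to\delta_w$ for some $w$. Passing to a subnet, we may assume $g_ix\to a$ and $g_iy\to b$. Then $g_i\mu\to\frac12(\delta_a+\delta_b)$, which forces $a=b=w$, so $x$ and $y$ are proximal. Hence $\ups(G)$ is minimal proximal and definable. The universal property of $\up(G)$ then gives a surjective $G$-morphism $\beta:\up(G)\to\ups(G)$.

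\textbf{Step 3: identify the two flows.} The composition $\alpha\circ\beta$ is an endomorphism of the minimal proximal flow $\up(G)$. By Fact \ref{fact-1}, $\alpha\circ\beta$ is the identity, so $\beta$ is injective. Since $\beta$ is also surjective, it is a continuous bijection between compact Hausdorff spaces, hence a homeomorphism and therefore an isomorphism of $G$-flows. (Symmetrically, $\beta\circ\alpha$ is the identity on $\ups(G)$, since $\ups(G)$ is also minimal proximal.)

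The only step needing care is making sure the universal properties are applied within the class of definable flows. This holds because both objects are definable, so each universal property yields the required morphism.
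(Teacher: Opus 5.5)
Your proof is correct and follows the paper's argument. The paper simply notes that the classes of minimal proximal and minimal strongly proximal definable flows coincide (by Theorem \ref{thm-1} and the fact that strong proximality implies proximality), so their universal objects coincide; you make that last step explicit via the mutual factor maps and Fact \ref{fact-1}, which is the standard argument behind uniqueness of universal objects.
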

\begin{proof}
The classes of minimal proximal definable flows and minimal strongly proximal definable
flows coincide. Their universal objects therefore coincide.
\end{proof}

\section{Quotient by a definably amenable group}\label{reduce}

We first deal with the quotient group without additional assumptions.

\begin{lemma}\label{lemma-D}
Let $1\rightarrow D\rightarrow G \stackrel{\pi}{\rightarrow} S\rightarrow 1$ be a definable exact sequence. Let $R_D$ be the smallest  closed $G$-invariant (setwise) equivalence relation on $\up(G)$ containing $\{(x,dx):d\in D\}$. Then $\up(G)/R_D$ is an $S$-flow and 
$$\up(G)/R_D\cong \up(S).$$
Equivalently, if we treat $\up(G)/R_D$ as a $G$-flow, we have
$$\up(G)/R_D\cong \up(S)|_\pi.$$
\end{lemma}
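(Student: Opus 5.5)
I would prove the lemma by showing that $Z:=\up(G)/R_D$ is a minimal proximal definable $S$-flow, and that every minimal proximal definable $S$-flow is a factor of it. Uniqueness of the universal object then gives $Z\cong\up(S)$. The second formulation follows by pulling the $S$-action back along $\pi$.

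\textbf{Step 1: $Z$ is an $S$-flow.} Since $R_D$ is a closed equivalence relation on a compact Hausdorff space, $Z$ is compact Hausdorff, and since $R_D$ is $G$-invariant, $G$ acts on $Z$. I then check that $D$ acts trivially on $Z$. The set $\{(x,y): dx\, R_D\, dy \text{ for all } d\in D\}$ is a closed $G$-invariant equivalence relation, because $D$ is normal: for $g\in G$, $d(gx)=g(g^{-1}dg)x$. Let $R'=\{(x,y): (x,y)\in R_D \text{ and } (dx,dy)\in R_D \text{ for all } d\in D\}$. This is again closed, $G$-invariant and an equivalence relation, and it contains the generators $(x,d'x)$, since $(dx,dd'x)=(dx,(dd'd^{-1})dx)$ and $dd'd^{-1}\in D$. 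By minimality of $R_D$ we get $R'=R_D$. Together with $(x,dx)\in R_D$, this shows $D$ fixes every class, so the action factors through $G/D\cong S$. Minimality and proximality pass to factors, so $Z$ is minimal proximal. For definability: given $z\in Z$ with lift $x$, the orbit map $G\to Z$ is $q\circ f_x$, which is definable. Since it is constant on $D$-cosets, a separating definable set $E\subseteq G$ can be replaced by a definable subset of $S$. I would take $\{s: \pi^{-1}(s)\subseteq E\}$ or the image $\pi(E)$, noting that the preimages of the two closed sets are unions of $D$-cosets, so either choice still separates.

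\textbf{Step 2: universality.} Let $W$ be a minimal proximal definable $S$-flow. Then $W|_\pi$ is a minimal proximal $G$-flow, since the $G$-orbits coincide with the $S$-orbits, and it is definable because $f_w\circ\pi$ is definable: the preimage under $\pi$ of a definable set is definable. By universality there is a $G$-factor map $\phi:\up(G)\to W|_\pi$. The relation $\{(x,y):\phi(x)=\phi(y)\}$ is closed, $G$-invariant, and contains $(x,dx)$ because $D$ acts trivially on $W$. Hence it contains $R_D$, and $\phi$ descends to an $S$-factor map $Z\to W$. In particular $\up(S)$ is a factor of $Z$, say via $\alpha: Z\to\up(S)$.

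\textbf{Step 3: identification.} Conversely, $Z$ is a minimal proximal definable $S$-flow by Step 1, so there is a factor map $\beta:\up(S)\to Z$. The composition $\alpha\circ\beta$ is an endomorphism of $\up(S)$, hence the identity by Fact \ref{fact-1}. So $\beta$ is injective, and $\beta$ is therefore an isomorphism.

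I expect the main obstacle to be the definability transfer in Step 1, namely producing a definable subset of $S$ that separates the preimages. Two points need care there: that $\pi(E)$ is definable (it is, as a definable image), and that saturation of the preimage sets under $D$-cosets makes the separation survive the projection. The invariance argument for $R'$ is routine but must be stated carefully.
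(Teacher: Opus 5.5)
Your proposal is correct and follows the paper's proof essentially step for step. It uses the same separating set $\{s\in S:\pi^{-1}(s)\subseteq E\}$ for definability over $S$ and the same descent of the factor map $\up(G)\to Y|_\pi$ through $R_D$, and your Step 3 just makes explicit, via Fact \ref{fact-1}, the uniqueness argument the paper leaves implicit in ``since $Y$ is arbitrary''. The $R'$ detour in Step 1 is unnecessary, because $R_D$ is $G$-invariant by definition and $(x,dx)\in R_D$ already gives $d[x]=[x]$ directly.
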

\begin{proof}
Since $\up(G)$ is compact and $R_D$ is closed,
the quotient $\up(G)/R_D$ is a compact Hausdorff space. Clearly, $D$ acts trivially on $\up(G)/R_D$.
It is easy to check that $\up(G)/R_D$ remains minimal, proximal, and
definable as a $G$-flow. To show that it is definable as an $S$-flow, we let $C_1, C_2$ be disjoint closed subsets of $\up(G)/R_D$ and $x\in \up(G)/R_D$. Let $f_{G,x}: G\rightarrow \up(G)/R_D$ and $f_{S,x}:S\rightarrow\up(G)/R_D$ be the orbit map for $x$ in $G$ and $S$, respectively.
Then there is a definable subset $U$ of $G$ separating $f_{G,x}^{-1}(C_1)$ and $f_{G,x}^{-1}(C_2)$. Since $f_{G,x}=f_{S,x}\circ\pi$, the definable subset $V:=\{s\in S: \pi^{-1}(s)\subseteq U\}$ separates $f_{S,x}^{-1}(C_1)$ and $f_{S,x}^{-1}(C_2)$.

Now let $Y$ be any minimal proximal definable $S$-flow. It is obvious that $Y|_\pi$ is a minimal proximal definable $G$-flow. The universality of $\up(G)$ gives a surjective $G$-map $\rho:\up(G)\rightarrow Y$. Since $D$ acts trivially on $Y$, the relation $\{(x,y)\in \up(G)\times \up(G): \rho(x)=\rho(y)\}$ contains $R_D$. So $\rho$ induces a surjective $S$-map $\up(G)/R_D\rightarrow Y$. Since $Y$ is arbitrary, $\up(G)/R_D\cong \up(S)$.
\end{proof}

Then we study definable amenability.

\begin{definition}
A definable group $G$ is definably amenable if it admits a (left) $G$-invariant Keisler measure. 
\end{definition}

As we mentioned earlier, such $G$-invariant
Keisler measures correspond to $G$-invariant regular Borel probability measures on $S_G(M)$.
Then we have the following easy result.

\begin{lemma}\label{lemma-easy}
Let $G$ be definably amenable. Let $X$ be a definable $G$-flow. Then $X$ admits a $G$-invariant regular Borel probability measure.
\end{lemma}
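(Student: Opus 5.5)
Fix a $G$-invariant Keisler measure $\nu$ on $G$; by the correspondence recalled above it is a $G$-invariant regular Borel probability measure on $S_G(M)$. Choose any point $x_0\in X$. Since $X$ is a definable flow, the orbit map $f_{x_0}:G\to X$ extends to $\hat{f_{x_0}}:S_G(M)\to X$, $p\mapsto px_0$. The plan is to push $\nu$ forward along $\hat{f_{x_0}}$ and take $\mu:=(\hat{f_{x_0}})_*\nu$. This is a regular Borel probability measure on $X$, and it remains to see that it is well defined and $G$-invariant.

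\textbf{Continuity.} First I will check that $\hat{f_{x_0}}$ is continuous, so that the pushforward makes sense as in Lemma \ref{lemma-lift}. The definition of $\hat{f_{x_0}}$ as a limit already presupposes that it is well defined. Continuity should follow directly from definability of $f_{x_0}$. Let $C\subseteq X$ be closed and let $q\notin\hat{f_{x_0}}^{-1}(C)$. Separate $\hat{f_{x_0}}(q)$ from $C$ by a closed neighbourhood $C'$ of $\hat{f_{x_0}}(q)$ disjoint from $C$. Then take a definable set $D$ separating $f_{x_0}^{-1}(C')$ from $f_{x_0}^{-1}(C)$. The clopen set given by $D$ (or its complement) contains $q$ and misses $\hat{f_{x_0}}^{-1}(C)$. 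This is the standard argument from \cite{GPP}, and I would cite it rather than redo it.

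\textbf{Equivariance.} Next I will check that $\hat{f_{x_0}}$ is $G$-equivariant, i.e. $\hat{f_{x_0}}(gp)=g\hat{f_{x_0}}(p)$ for $g\in G$. Take a net of realized types $g_i\to p$. Then $gg_i\to gp$, since left translation by $g$ is a homeomorphism of $S_G(M)$. Continuity of the action of $g$ on $X$ then gives $\lim_i gg_ix_0=g\lim_i g_ix_0$.

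\textbf{Invariance.} With these in hand, for $g\in G$ and Borel $B\subseteq X$,
$$g\mu(B)=\mu(g^{-1}B)=\nu(\hat{f_{x_0}}^{-1}(g^{-1}B))=\nu(g^{-1}\hat{f_{x_0}}^{-1}(B))=\nu(\hat{f_{x_0}}^{-1}(B))=\mu(B),$$
using equivariance in the third equality and invariance of $\nu$ in the fourth. Equivalently, one can argue with integrals: $\int f\,d(g\mu)=\int f(g\,\cdot)\circ\hat{f_{x_0}}\,d\nu$ for $f\in C(X)$.

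The only step needing care is the passage from the $G$-invariant Keisler measure to a $G$-invariant Borel measure on $S_G(M)$. One has to note that the $G$-action on $S_G(M)$ permutes clopen sets, and that the unique regular extension of a translated finitely additive measure is the translate of the extension; uniqueness of regular extensions gives this. I do not expect any real obstacle here.
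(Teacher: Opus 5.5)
Your proposal is correct and follows the paper's proof: push the $G$-invariant measure on $S_G(M)$ forward along $\hat{f_{x_0}}$. Your continuity, equivariance and invariance checks fill in what the paper leaves implicit. One small slip in the continuity sketch: separating $f_{x_0}^{-1}(C')$ from $f_{x_0}^{-1}(C)$ is not enough, since $\cl(f_{x_0}(D))$ may still meet $C$, so separate $f_{x_0}^{-1}(C')$ from $f_{x_0}^{-1}(\cl(U))$ for an open $U\supseteq C$ with $\cl(U)\cap C'=\emptyset$ instead (or simply cite \cite{GPP}, as you intend).
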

\begin{proof}
Fix $x_0\in X$. The map $\hat{f_{x_0}}: S_G(M)\rightarrow X: p\mapsto px_0$ sends $G$-invariant regular Borel probability measures on $S_G(M)$ to regular Borel probability measures on $X$.
\end{proof}

\begin{lemma}\label{lemma-trivial-D}
Assume that $T$ is NIP. Let $D\unlhd G$ be a definably amenable normal subgroup of $G$. Let $X$ be a minimal proximal definable $G$-flow. Then $D$ acts trivially on $X$. 
\end{lemma}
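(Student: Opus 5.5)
The plan is to combine Lemma \ref{lemma-easy}, applied to $D$, with the strong proximality supplied by Theorem \ref{thm-1}. First I check that $X$, viewed as a $D$-flow, is a definable $D$-flow. For $x\in X$ the orbit map $f^D_x:D\to X$ is the restriction of $f_x:G\to X$ to $D$. If a definable $U\subseteq G$ separates $f_x^{-1}(C_1)$ and $f_x^{-1}(C_2)$, then $U\cap D$ separates the corresponding preimages in $D$. Since $D$ is definably amenable, Lemma \ref{lemma-easy} then gives a $D$-invariant measure $\mu\in\Prob(X)$.

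Next I use normality to spread $\mu$ around. For $g\in G$ and $d\in D$ we have $d(g\mu)=g(g^{-1}dg)\mu=g\mu$, so every $g\mu$ is again $D$-invariant. The set $\Prob(X)^D$ of $D$-invariant measures is weak-* closed, because each $d$ acts continuously on $\Prob(X)$. It therefore contains the closure of $G\mu$. By Theorem \ref{thm-1}, this closure contains $\delta_z$ for every $z\in X$, since the theorem allows any $z$ and not merely some Dirac measure. Hence every $\delta_z$ is $D$-invariant. But $d\delta_z=\delta_{dz}$, so $\delta_{dz}=\delta_z$, which forces $dz=z$ because $X$ is Hausdorff. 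So $D$ acts trivially on $X$.

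The main obstacle is making sure Theorem \ref{thm-1} really gives \emph{every} Dirac measure in $\overline{G\mu}$. Its statement does give this. Even without it, minimality would suffice: the $z$ with $\delta_z\in\Prob(X)^D$ form a closed $G$-invariant nonempty subset of $X$, hence all of $X$. The other point needing care is the definability of the restricted $D$-flow, which I would verify directly as above, noting that $D$ is a definable subgroup.
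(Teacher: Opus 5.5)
Your proof is correct and is essentially the paper's argument: Lemma~\ref{lemma-easy} gives a $D$-invariant $\mu$, normality keeps every $g\mu$ $D$-invariant, and Theorem~\ref{thm-1} puts a Dirac measure in $\overline{G\mu}$, which must therefore be $D$-invariant. The differences are cosmetic. You use the ``every $z$'' form of Theorem~\ref{thm-1} to conclude directly, whereas the paper takes a single $\delta_z$ and applies minimality to the closed $G$-invariant set $\mathrm{Fix}(D)$, which is your fallback argument. You also verify explicitly that $X$ restricts to a definable $D$-flow, a point the paper leaves implicit.
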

\begin{proof}
By Lemma \ref{lemma-easy}, there exists a $D$-invariant measure $\mu\in \Prob(X)$.  By
Theorem \ref{up-ups}, there are a net $(g_i)_i$ in $G$ and $z\in X$ such that $\lim_i g_i\mu=\delta_z$.
The normality of $D$ implies that every $g_i\mu$ remains $D$-invariant. So the limit $\delta_z$ is $D$-invariant. So $\mathrm{Fix}(D):=\{x\in X:dx=x \text{ for every }d\in D\}$
is nonempty. It is easy to check that $\mathrm{Fix}(D)$ is closed and $G$-invariant (setwise). By the minimality of $X$, $\mathrm{Fix}(D)=X$.
\end{proof}

Then Theorem \ref{thm-reduce} is a corollary of  Lemma \ref{lemma-D} and Lemma \ref{lemma-trivial-D}.

\begin{coro}
Assume that $T$ is NIP. Let
$1\rightarrow D\rightarrow G \stackrel{\pi}{\rightarrow} S\rightarrow 1$ be a definable exact sequence where $D$ is definably amenable.
Then
$$\up(G)\cong \up(S)|_\pi.$$
\end{coro}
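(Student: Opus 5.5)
The plan is to show that $R_D$ is the identity relation on $\up(G)$, so that Lemma \ref{lemma-D} gives $\up(G)=\up(G)/R_D\cong \up(S)|_\pi$ directly.

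First, $\up(G)$ is a minimal proximal definable $G$-flow and $D\unlhd G$ is definably amenable, being the kernel of $\pi$. Since $T$ is NIP, Lemma \ref{lemma-trivial-D} shows that $D$ acts trivially on $\up(G)$. Hence $dx=x$ for every $d\in D$ and $x\in\up(G)$, so the generating set $\{(x,dx):d\in D\}$ is contained in the diagonal $\Delta$ of $\up(G)\times\up(G)$.

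Next, $\Delta$ is closed, because $\up(G)$ is Hausdorff. It is also a $G$-invariant equivalence relation. Since $R_D$ is the smallest closed $G$-invariant equivalence relation containing the generating set, we get $R_D=\Delta$. The quotient map $\up(G)\to\up(G)/R_D$ is therefore a $G$-equivariant homeomorphism.

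Combining this with the second isomorphism of Lemma \ref{lemma-D}, namely $\up(G)/R_D\cong \up(S)|_\pi$ as $G$-flows, gives $\up(G)\cong\up(S)|_\pi$.

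The only point to watch is whether Lemma \ref{lemma-D} silently requires $D$ to be definable. It does not: its proof uses only the exactness of the sequence and the definability of $\pi$. So no step presents a real obstacle; all the substantive work is done in Lemma \ref{lemma-trivial-D}, via Theorem \ref{up-ups}.

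As an alternative route, one can argue by universality. Since $D$ acts trivially on $\up(G)$, it is an $S$-flow. It is minimal and proximal, and it is definable as an $S$-flow by the separating-set argument in the proof of Lemma \ref{lemma-D}. Universality then gives maps in both directions between $\up(G)$ and $\up(S)|_\pi$. Their composites are endomorphisms of minimal proximal flows, and Fact \ref{fact-1} forces these to be the identity.
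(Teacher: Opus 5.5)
Your argument is correct and is essentially the paper's proof: Lemma \ref{lemma-trivial-D}, applied to the normal subgroup $D=\ker\pi$, makes $D$ act trivially on $\up(G)$, so $R_D$ is the diagonal, and Lemma \ref{lemma-D} then gives $\up(G)\cong\up(S)|_\pi$. Your alternative universality route via Fact \ref{fact-1} is also sound, but it only re-derives Lemma \ref{lemma-D} in this special case.
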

\begin{proof}
 Lemma \ref{lemma-trivial-D} implies that $R_D=\{(x,x): x\in \up(G)\}$. Then apply Lemma \ref{lemma-D}.
\end{proof}

\begin{remark}
Lemma \ref{lemma-trivial-D} was proved by Krupi\'{n}ski and Pillay in \cite{KP} in the case when $D=G$. Moreover, they also proved in \cite{KP} that, in NIP theories, $G$ is definably amenable iff $G$ is definably strongly amenable (i.e., $\up(G)$ is trivial by definition).
The result is also a special case of Theorem \ref{thm-reduce}. With Theorem \ref{up-ups}, in summary, we have that, in NIP theories, $G$ is definably amenable iff $\up(G)$ is trivial iff $\up_s(G)$ is trivial.
\end{remark}

We next treat the case of taking the quotient by a definably amenable group which need not be normal, and give a result needed in the next section. 

Recall that when every type in $S_G(M)$ is definable, $S_G(M)$ is the universal $G$-ambit. Moreover, for every $q\in S_G(M)$, the map $f_q:G\rightarrow S_G(M): g\mapsto g q$ induces $\hat{f_q}:S_G(M)\rightarrow S_G(M): p\mapsto pq$. The operation $*:S_G(M)\times S_G(M)\rightarrow S_G(M):(p,q)\mapsto p*q:=pq$ is associative and therefore $(S_G(M),*)$ is a semigroup. If $X$ is a definable $G$-flow, then $S_G(M)$ naturally acts on $X$ by $(p,x)\mapsto px$. And for $x\in X$, the closure of $Gx$ is $S_G(M)x$.
If we further assume NIP, then by Proposition 6.3  \cite{KP}, $\Prob(S_G(M))$ is also a definable $G$-flow. If $X$ is minimal, by the university of $S_G(M)$,
there exists a surjective $G$-map $\rho:S_G(M)\rightarrow X$. Then, by Lemma \ref{lemma-lift}, $\rho$ induces a surjective $G$-map $\rho_*:\Prob(S_G(M))\rightarrow \Prob(X)$. Since $\Prob(S_G(M))$ is a definable $G$-flow, so is $\Prob(X)$.

\begin{prop}\label{prop-use}
Assume that $T$ is NIP. Assume that every type in $S_G(M)$ is definable. Let $D\leq G$ be a definably amenable subgroup of $G$. Assume that $Z:=G/D$ is definable (not merely interpretable) and there exists a definable section $\alpha:Z\rightarrow G$ for the projection $\pi: G\rightarrow Z$. Suppose that $u\in S_G(M)$ and $\xi\in S_Z(M)$ such that 
\begin{enumerate}
\item $uz=\xi$ for every $z\in S_Z(M)$.
\item $X:=S_G(M)\xi$ is minimal.
\end{enumerate}
Then $X$ is the universal minimal proximal definable $G$-flow.
\end{prop}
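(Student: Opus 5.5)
The plan is to check the three defining properties (minimal, proximal, definable) and then prove universality by building, for an arbitrary minimal proximal definable $G$-flow $Y$, a $G$-map $X\to Y$. Minimality of $X$ is hypothesis (2). Definability holds because every type over $M$ is definable: $S_Z(M)$ is then a definable $G$-flow, since $G$ acts on $Z$ definably and the orbit map of $q\in S_Z(M)$ is controlled by the definable type $q$. A closed $G$-invariant subset of a definable flow is again definable.

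For proximality, take $x_1=p_1\xi$ and $x_2=p_2\xi$ in $X$ with $p_1,p_2\in S_G(M)$. By hypothesis (1), $ux_1=\xi=ux_2$. Choose a net $(g_i)_i$ in $G$ converging to $u$ in $S_G(M)$. By the definition of the extended orbit maps $\hat f_{x_j}$, we get $g_ix_j\to ux_j$ for $j=1,2$. So $\lim_i g_ix_1=\lim_i g_ix_2=\xi$, and $X$ is proximal.

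For universality, let $Y$ be a minimal proximal definable $G$-flow. By Theorem \ref{thm-1}, $Y$ is strongly proximal. Viewing $Y$ as a definable $D$-flow, Lemma \ref{lemma-easy} gives a $D$-invariant $\mu\in\Prob(Y)$.
\begin{enumerate}[(a)]
\item Define $\Phi_0:Z\to\Prob(Y)$ by $\Phi_0(z)=\alpha(z)\mu$. This depends only on the coset, since $\mu$ is $D$-invariant. Hence $\Phi_0(gz)=g\Phi_0(z)$, because $\alpha(gz)D=g\alpha(z)D$.
\item $\Phi_0$ is a definable map. It is the composition of the definable section $\alpha$ with the orbit map $g\mapsto g\mu$. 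The latter is definable because $\Prob(Y)$ is a definable $G$-flow; this uses NIP and minimality of $Y$, via Proposition 6.3 of \cite{KP} and Lemma \ref{lemma-lift}, as explained just before the statement.
\item $\Phi_0$ therefore extends to a continuous map $\Phi:S_Z(M)\to\Prob(Y)$. Equivariance passes from realized points to all types by continuity of each $g$ on both spaces.
\end{enumerate}
The image $\Phi(X)$ is then a minimal subflow of $\Prob(Y)$, being the continuous equivariant image of the minimal flow $X$.

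The key step is to show that $\Phi(X)$ lies in the set $\Delta=\{\delta_y:y\in Y\}$ of Dirac measures. Take $\nu\in\Phi(X)$. Strong proximality gives $\delta_y\in\cl(G\nu)\subseteq\Phi(X)$, since $\Phi(X)$ is closed and invariant. Minimality of $\Phi(X)$ then gives $\Phi(X)=\cl(G\delta_y)\subseteq\Delta$, because $\Delta\cong Y$ is a closed invariant subset. Composing $\Phi|_X$ with the identification $\Delta\cong Y$ yields a $G$-map $X\to Y$, which is onto by minimality of $Y$.

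So $X$ maps onto every minimal proximal definable $G$-flow. Since $X$ is itself one of these, Fact \ref{fact-1} and the uniqueness of $\up(G)$ give $X\cong\up(G)$. I expect the main obstacle to be step (b), the definability of $\Phi_0$, because it requires the definable-flow structure on $\Prob(Y)$ together with the definable section $\alpha$. Hypothesis (1) is used only in the proximality step.
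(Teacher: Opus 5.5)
Your proof is correct, but for the key universality step it takes a different route from the paper.

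\textbf{The paper's route.} The paper uses hypothesis (1) in the universality argument itself. It picks a net with $g_i\mu\to\delta_{y_0}$ and passes to a limit $z_0\in S_Z(M)$ of the cosets $g_iD$. It then uses the semigroup equivariance $\hat f(pz)=p\hat f(z)$ to compute $\hat f(\xi)=\hat f(uz_0)=u\delta_{y_0}=\delta_{uy_0}$. Pushing forward along $S_G(M)$ gives $\hat f(p\xi)=\delta_{puy_0}$, so $\hat f(X)$ consists of Dirac measures.

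\textbf{Your route.} You use only minimality of $X$ and strong proximality of $Y$. The image $\Phi(X)$ is a minimal subflow of $\Prob(Y)$. It contains some $\delta_y$, so it equals $\cl(G\delta_y)$, which lies in the closed invariant set $\Delta$. This is the classical fact that, for minimal strongly proximal $Y$, the Dirac copy of $Y$ is the unique minimal subflow of $\Prob(Y)$.

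\textbf{What each buys.} Your version needs only $G$-equivariance of $\Phi$, not its compatibility with the $S_G(M)$-action. It also separates the roles of the hypotheses cleanly. Hypothesis (1) is used only to make $X$ proximal, and your argument shows that any minimal subflow of $S_Z(M)$ maps onto every minimal proximal definable $G$-flow. The paper's version buys an explicit formula $p\xi\mapsto puy_0$ for the factor map.

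\textbf{The invariant measure.} Obtaining the $D$-invariant $\mu$ by applying Lemma~\ref{lemma-easy} directly to $Y$, viewed as a definable $D$-flow, is cleaner than the paper's construction inside $\Prob(Y)$. As written, the paper's $\hat{f_\lambda}(\nu)$ needs a barycenter to land in $\Prob(Y)$.

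\textbf{One small fix.} The hypothesis only says that types in $S_G(M)$ are definable, so ``every type over $M$ is definable'' overstates it. To justify that $S_Z(M)$ is a definable $G$-flow, note that each $q\in S_Z(M)$ equals $\pi_*(\alpha_*q)$ with $\alpha_*q\in S_G(M)$ definable. Hence $S_Z(M)$ is a factor of the definable flow $S_G(M)$ via the extension of $\pi$.
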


\begin{proof}
It is easy to check that $X$ is a minimal proximal definable $G$-flow. 

Let $Y$ be an arbitrary minimal proximal definable $G$-flow. As we mentioned above, $\Prob(Y)$ is also a definable $G$-flow.

The definable amenability of $D$ provides a $D$-invariant Keisler measure $\nu$ on $D$. We treat $\mu$ as a regular Borel probability measure on $S_D(M)$. Pick an arbitrary element in $\lambda\in \Prob(Y)$. Then the orbit map $f_\lambda:D\rightarrow \Prob(Y)$ can be extend to $\hat{f_\lambda}: S_D(M)\rightarrow \Prob(Y)$. 
Let $\mu=\hat{f_\lambda}(\nu)\in \Prob(Y)$.
Then $\mu$ is $D$-invariant. 

Let $f: Z=G/D\rightarrow  \Prob(Y): z=gD\mapsto g\mu$. It is well-defined and is a $G$-map. Also note that $f(z)=\alpha(z) \mu$.  Since $Z$ and $\alpha$ are all definable. The map $f$ can be naturally extended to a $G$-map $\hat{f}: S_Z(M)\rightarrow \Prob(Y)$. Moreover, when treating $S_Z(M)$ and $\Prob(Y)$ as $S_G(M)$-actions, we also have $\hat{f}(pz)=p\hat{f}(z)$ for any $p\in S_G(M)$ and $z\in S_Z(M)$.

By Theorem \ref{up-ups}, $Y$ is strongly proximal. Hence there are net $(g_i)_i$ in $G$ and $y_0\in Y$, $\lim_i g_i \mu=\delta_{y_0}$. After taking a subnet, we let $(g_i)_i$  converge to some $z_0\in S_Z(M)$. 
Then $$\hat{f}(\xi)=\hat{f}(uz_0)=u\hat{f}(z_0)=u\delta_{y_0}=\delta_{u y_0}.$$
Moreover, every $x\in X$ has a form $x=p \xi$ for some $p\in S_G(M)$, and then 
$$\hat{f}(x)=\hat{f}(p\xi)=\delta_{puy_0}.$$
Let $\delta:Y\rightarrow\Prob(Y):y\mapsto\delta_y$ be the  Dirac embedding. We have $\hat{f}(X)\subseteq \delta(Y)$. The map $\delta^{-1}\circ \hat{f}|_X: X\rightarrow Y$ is a continuous $G$-map. Its image is a subflow of the minimal flow $Y$, so it is surjective.  Since $Y$ is picked arbitrarily, $X$ has the universal property. 
\end{proof}

\section{On $p$-adic definable groups}\label{p-adic}

From now on, we work in $T=\pCF$, theory of $\Q_p$ in the language of fields. More precisely, in this section, we work in the structure $M=\Q_p$. Note that every type over $\Q_p$ is definable (see \cite{Delon}) and it is well-known that $\pCF$ is NIP. Moreover, $\pCF$ admits definable Skolem functions, i.e., every definable function admits a definable section (using the same parameters) (see \cite{Skolem}). Thus, the assumptions on 
$T$ and $M$  Proposition~\ref{prop-use} are satisfied, and the definable section $\alpha$ is automatically available. 

For the properties of definable groups and definable topological dynamics in $\pCF$, we refer the reader to \cite{PYZ} and \cite{YZ}.

\begin{fact}[\cite{YZ}]
Let $G$ be a $p$-adic definable group. Then a definable exact sequence 
\begin{equation}\label{decomp-1}
1\rightarrow D\rightarrow G \stackrel{\pi}{\rightarrow} S\rightarrow 1
\end{equation}
where $D$ is definably amenable and $S$ is either trivial (when $G$ is definably amenable) or a finite-index subgroup of $\Ss(\Q_p)$ with $\Ss$ a connected semisimple algebraic group which is an almost product of almost $\Q_p$-simple, $\Q_p$-isotropic algebraic groups over $\Q_p$ (when $G$ is not definably amenable).
\end{fact}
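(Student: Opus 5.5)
The approach is to reduce to the algebraic case via the local structure theory of $p$-adic definable groups, and then read off $D$ and $S$ from the Levi-type structure of a linear algebraic group over $\Q_p$.

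\textbf{Step 1: from $G$ to an algebraic group.} By the Hrushovski--Pillay-type results on groups definable in $\pCF$ (as developed in \cite{PYZ}), there is a definable finite-index subgroup $G_0\le G$ and a definable homomorphism $\sigma:G_0\to \Gg(\Q_p)$ with finite kernel and finite-index image, for some algebraic group $\Gg$ over $\Q_p$. I would first prove the statement for $\Gg(\Q_p)$ and its finite-index subgroups, and then pull back.

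\textbf{Step 2: the algebraic case.} Let $\mathbf{R}$ be the solvable radical of $\Gg^0$. Then $\Gg^0/\mathbf{R}$ is semisimple and is an almost product of almost $\Q_p$-simple factors. These factors split into the $\Q_p$-anisotropic ones $\mathbf{A}$ and the $\Q_p$-isotropic ones $\Ss$. Let $\mathbf{N}$ be the preimage of $\mathbf{A}$ in $\Gg^0$, so that $\Gg^0/\mathbf{N}$ is isogenous to $\Ss$. The group $\mathbf{N}(\Q_p)$ is definably amenable because:
\begin{enumerate}[(a)]
\item $\mathbf{R}(\Q_p)$ is solvable, hence definably amenable;
\item $\mathbf{A}(\Q_p)$ is compact, since it is anisotropic over a local field, and it carries a definable Haar measure;
\item definable amenability is preserved under definable extensions, under finite kernels, and under passing to finite-index overgroups and subgroups, by averaging measures.
\end{enumerate}
The image of $\Gg^0(\Q_p)$ in $(\Gg^0/\mathbf{N})(\Q_p)$, and then in $\Ss(\Q_p)$ after composing with the isogeny, has finite index. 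This uses the finiteness of $H^1(\Q_p,-)$ for the relevant kernels.

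\textbf{Step 3: back to $G$.} Let $D$ be the preimage in $G_0$ of $\mathbf{N}(\Q_p)$. Then $D$ is definably amenable and the quotient $G_0/D$ maps onto a finite-index subgroup of $\Ss(\Q_p)$. The remaining task is to replace $G_0$ by $G$ and make $D$ normal in $G$.

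\textbf{Main obstacle.} I expect this last step to be the hardest. My plan is to characterize $D$ intrinsically, as the largest normal definably amenable definable subgroup up to commensurability. Conjugation by $G$ preserves this property, so the normal core $\bigcap_{g\in G} D^g$ is a finite intersection (since $[G:G_0]<\infty$). This core is still definably amenable and has finite index in $D$.

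Quotienting by the core gives $\pi$. Its image $S$ is an extension of $\Ss(\Q_p)$-data by a finite group. Absorbing this finite part into $D$, or using that $\Ss$ has no nontrivial definably amenable normal subgroups of finite index beyond finite ones, yields the exact sequence with $S$ of finite index in $\Ss(\Q_p)$. The same characterization gives uniqueness of $D$ up to commensurability.

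Finally, if $G$ is definably amenable then $\Ss$ must be trivial: a noncompact isotropic simple $\Ss(\Q_p)$ is not amenable. So $S$ is trivial and $D=G$ in that case.
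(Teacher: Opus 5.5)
The paper gives no proof of this Fact; it is quoted from \cite{YZ}, so there is no internal argument to compare with. Judged on its own, your proposal has two genuine gaps.

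\textbf{Step 1 is false as stated.} The Hrushovski--Pillay theorem for $\pCF$ is only local: some definable open subgroup of $G$ is definably isomorphic to an open subgroup of some $\Gg(\Q_p)$. A homomorphism on a finite-index subgroup, with finite kernel and finite-index image, need not exist.

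For $G=(\mathbb{Z}_p,+)$, such a $\sigma$ would make $\Gg(\Q_p)$ compact, since it would be finitely many translates of the compact group $\sigma(G_0)$. But the Zariski closure of the graph of $\sigma$ in $\mathbb{G}_a\times\Gg$ forces $\Gg^0\cong\mathbb{G}_a$, whose $\Q_p$-points are not compact.

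With only a local isomorphism, there is no global $D=\sigma^{-1}(\mathbf{N}(\Q_p))$, and images in $\Ss(\Q_p)$ are merely open, not of finite index. Your rule ``anisotropic factors go into $D$'' then gives the wrong answer. For the compact, hence definably amenable, group $G=\SL_2(\mathbb{Z}_p)$ it yields a finite $D$, whereas necessarily $D=G$.

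Two things are missing. First, you need a homomorphism defined on all of $G$ (or a finite-index subgroup) into an algebraic group, with definably amenable kernel. The adjoint representation $G\to\mathrm{GL}(\mathrm{Lie}(G))$ is the natural candidate, but its kernel must then be controlled. Second, you need a dichotomy for open subgroups of the isotropic factors, e.g.\ Tits' theorem that an open subgroup of $\Ss_i(\Q_p)^+$ is either compact or all of it. Factors with compact image are then absorbed into $D$.

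\textbf{The absorption step in your ``main obstacle'' cannot work in general.} Let $G=(\SL_2(\Q_p)\times\SL_2(\Q_p))\rtimes\langle\tau\rangle$, with $\tau$ swapping the factors. Every definable, definably amenable normal $D\unlhd G$ is finite:
\begin{itemize}
\item If $D\cap\SL_2(\Q_p)^2$ projected non-centrally to a factor, simplicity of $\mathrm{PSL}_2(\Q_p)$ and perfectness of $\SL_2(\Q_p)$ would put that whole factor inside $D$. Then a finite-index subgroup of $D$ would map onto $\SL_2(\Q_p)$, which is not definably amenable.
\item An element $(a,b)\tau\in D$ would have the infinitely many conjugates $(ca,bc^{-1})\tau$.
\end{itemize}

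Now suppose $G/D$ had finite index in $\Ss(\Q_p)$ with $\Ss$ connected. By dimension, $\Ss=H_1H_2$, where $H_1,H_2$ are the Zariski closures of the images of the two factors. They commute, so both are normal in $\Ss$. Yet conjugation by $\pi(\tau)\in\Ss(\Q_p)$ swaps them, a contradiction.

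So a finite group permuting isotropic factors obstructs the conclusion as you phrase it, and no intrinsic characterization of $D$ removes this. A correct argument must either pass to a finite-index subgroup of $G$ or allow $\Ss$ to be disconnected. This example also shows that the Fact, as quoted here with $\Ss$ connected, needs such a qualification.
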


We now fix the notation from (\ref{decomp-1}).
By Theorem \ref{thm-reduce}, we can reduce the computation of $\up(G)$ to that of $\up(S)$. So we assume that $S$ is not trivial. By the proof of Theorem 4.4 \cite{G-Book} to the definable context, $\up(S)$ can be treated as an $\Ss(\Q_p)$-flow and $\up(S)\cong\up(\Ss(\Q_p))$ as $\Ss(\Q_p)$-flows. Thus, for convenience, we assume that $S=\Ss(\Q_p)$.

Let $\Pp$ be a minimal $\Q_p$-parabolic subgroup of $\Ss$. Let $\Ff:=\Ss/\Pp$. Let $P:=\Pp(\Q_p)$, $\F:=S/P$ and $o=P\in \F$. Then  $F=\Ff(\Q_p)$, and in fact, there is a definable compact subgroup $K$ of $S$, such that $S=KP$ and $\F=K/(K\cap P)$ (Iwasawa decomposition, see Section 5.3 \cite{Tits}). The definability of $K$ can be guaranteed by \cite{PY}. Also note that $P$ is definably amenable, and more precisely, it is a definably amenable
component of $S$ (see \cite{PYZ} and\cite{YZ}).

Recall that a definable set $X$ of $K$ is generic if $K$ can be covered by finitely many translates of $X$. A type in $S_K(\Q_p)$ is generic if all definable sets contained in it are generic. Let $\Gen(K)$ be the subspace of $S_K(\Q_p)$ consisting of all generic types.
It is well-known that, as $K$ is compact, $\Gen(K)$ is nonempty and is the unique minimal subflow of $S_K(\Q_p)$ (see \cite{YZ} for details). 

Now we generalize the genericity to actions. 
For a definable action of $K$ on a definable set $Z$ (i.e., the action map $K\times Z\rightarrow Z$ is definable), we say that a definable set $X$ of $Z$ is $K$-generic, if $Z$ can be covered by finitely many $K$-translates of $X$. Accordingly, a type in $S_Z(\Q_p)$ is defined to be $K$-generic if all definable sets contained in it are $K$-generic. If $Z$ is a definable manifold, we let $S^*_Z(\Q_p)$ be the subspace of $S_Z(\Q_p)$ consisting of types with the full dimension. Clearly, $S^*_Z(\Q_p)$ is a $K$-subflow of $S_K(\Q_p)$.

\begin{lemma}\label{easy-full-dim}
Let $K$ act transitively and definably on
a compact definable manifold $Z$. Then a definable subset $X\subseteq Z$ is $K$-generic iff $X$ is of full dimension in $Z$. Equivalently, $S_Z^*(\Q_p)$ is exactly the subspace of all $K$-generic types.
\end{lemma}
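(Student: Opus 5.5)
We prove the two directions separately, using only standard facts about dimension in $\pCF$: dimension is additive on finite unions, it is preserved by definable bijections (in particular by translation by elements of $K$), and a definable subset of a definable manifold $Z$ has dimension $\dim Z$ iff it has nonempty interior in $Z$.

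\emph{Generic implies full dimension.} Suppose $Z=\bigcup_{i=1}^n k_iX$ with $k_i\in K$. Each $k_iX$ is the image of $X$ under the definable homeomorphism $z\mapsto k_iz$, so $\dim(k_iX)=\dim X$. Additivity of dimension on finite unions gives $\dim Z=\max_i \dim(k_iX)=\dim X$, so $X$ has full dimension.

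\emph{Full dimension implies generic.} This is the main step. If $\dim X=\dim Z$, then $X$ contains a nonempty open subset $W$ of $Z$. Fix $z_0\in Z$. The orbit map $K\to Z$, $k\mapsto kz_0$, is continuous and, by transitivity, surjective. Hence for every $z\in Z$ there is $k\in K$ with $kW\ni z$. Indeed, choose $w\in W$ and $h,h'\in K$ with $hz_0=w$ and $h'z_0=z$; then $k=h'h^{-1}$ works. So $\{kW:k\in K\}$ is an open cover of $Z$, because each $kW$ is open, being the image of $W$ under a homeomorphism of $Z$. Compactness of $Z$ gives a finite subcover, so finitely many $K$-translates of $X$ cover $Z$ and $X$ is $K$-generic. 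The one point to check is that the action is by homeomorphisms. Definable functions in $\pCF$ are continuous off a lower-dimensional set, and each map $z\mapsto kz$ is a definable bijection of $Z$ with definable inverse. If continuity is not part of the hypothesis "definable action on a definable manifold", it can be arranged. Replace $W$ by an open set on which the relevant translation maps are continuous; a generic point argument handles this. Alternatively, in our application $Z=\F=K/(K\cap P)$ with its quotient topology, and there the action is continuous.

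\emph{Types.} The equivalence for types follows immediately. A type $q\in S_Z(\Q_p)$ has full dimension iff every formula in $q$ defines a set of full dimension. By the two directions above, this holds iff every formula in $q$ is $K$-generic, that is, iff $q$ is $K$-generic. Hence $S_Z^*(\Q_p)$ is exactly the space of $K$-generic types.

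I expect the only delicate point to be the topological input in the second direction: that full-dimensional sets have interior, and that translations are homeomorphisms. Both are standard facts about $p$-adic dimension theory and definable manifolds.
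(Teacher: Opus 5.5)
Your proof is correct and follows the paper's argument essentially verbatim. For generic $\Rightarrow$ full dimension you use the max-dimension property of finite unions together with translation invariance of dimension; for the converse, a full-dimensional set contains a nonempty open set, its $K$-translates cover $Z$ by transitivity, and compactness gives a finite subcover.

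The paper tacitly treats the translations $z\mapsto kz$ as homeomorphisms, which is exactly the point you flag. Your fallback to the application $Z=\F=K/(K\cap P)$, where the action is continuous, is the right way to settle it; the vaguer ``generic point'' alternative is not needed.
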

\begin{proof}
Note that, $\dim(X_1\cup X_2)=\max(\dim(X_1), \dim(X_2))$ for any definable subsets $X_1,X_2\subseteq Z$.  Thus, $X$ is of full dimension when it is $K$-generic. Conversely, a full-dimensional definable subset contains a nonempty open set $U$. The translates of $U$ cover the compact space $Z$ by transitivity of the action, and compactness gives a finite
subcover.
\end{proof}

Let $\rho: K\rightarrow \F=K/(K\cap P):x\mapsto xo$ be the natural projection. Then every fiber of $\rho$ has dimension $\dim(K\cap P)$.
By computing dimensions on the fibers, we see that the induced map $\rho:S_K(\Q_p)\rightarrow S_\F(\Q_p)$ sends $\Gen(K)$ to $S^*_\F(\Q_p)$. And it is easy to check that $\rho(\Gen(K))=S^*_\F(\Q_p)$. Then we have

\begin{lemma}\label{lemma-minimal}
$S^*_\F(\Q_p)$ is the unique minimal $K$-subflow, and also the unique minimal $S$-subflow, of $S_\F(\Q_p)$.
\end{lemma}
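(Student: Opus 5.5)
We prove Lemma~\ref{lemma-minimal} in two stages: first for the compact group $K$, then for $S$ using $S=KP$ and the fact that $S$-subflows are in particular $K$-subflows. Throughout we use Lemma~\ref{easy-full-dim}, which applies because $K$ acts transitively and definably on the compact definable manifold $\F=K/(K\cap P)$. By that lemma, $S^*_\F(\Q_p)$ is exactly the set of $K$-generic types in $S_\F(\Q_p)$.

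\emph{Step 1: every nonempty $K$-subflow contains $S^*_\F(\Q_p)$.} Let $Y\subseteq S_\F(\Q_p)$ be a nonempty $K$-subflow and fix $q\in Y$. Let $r\in S^*_\F(\Q_p)$ and let $\varphi\in r$. By Lemma~\ref{easy-full-dim}, $\varphi$ is $K$-generic, so $\F=\bigcup_{j\le n}k_j\varphi$ for some $k_1,\dots,k_n\in K$. Since $q$ is a complete type, some $k_j\varphi\in q$, which gives $k_j^{-1}q\in[\varphi]$. Hence every basic clopen neighbourhood of $r$ meets $Kq\subseteq Y$. As $Y$ is closed, $r\in Y$. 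So $S^*_\F(\Q_p)\subseteq Y$.

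\emph{Step 2: $S^*_\F(\Q_p)$ is itself a subflow.} It is nonempty, since $\rho(\Gen(K))=S^*_\F(\Q_p)$ and $\Gen(K)\neq\emptyset$. It is closed, because its complement is the union of the clopen sets $[\psi]$ with $\dim\psi<\dim\F$. It is $K$-invariant, because translation by $k\in K$ is a definable bijection of $\F$ and so preserves dimension. The same argument gives $S$-invariance, since each $s\in S$ acts on $\F$ by a definable bijection and therefore preserves the dimension of definable sets. Combining Steps 1 and 2, $S^*_\F(\Q_p)$ is a subflow contained in every nonempty $K$-subflow. It is therefore the unique minimal $K$-subflow, and in particular it is minimal as a $K$-flow.

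\emph{Step 3: passage to $S$.} Any nonempty $S$-subflow $Y$ is in particular a nonempty $K$-subflow, so $S^*_\F(\Q_p)\subseteq Y$ by Step 1. Since $S^*_\F(\Q_p)$ is $S$-invariant by Step 2, it is the unique minimal $S$-subflow.

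The only point needing care is the $S$-invariance in Step 2. This requires that the action of $S$ on $\F$ be definable, which holds because $\F=\Ff(\Q_p)$ and $S$ acts algebraically. It also requires that $p$-adic dimension be preserved under definable bijections, which is standard. Everything else reduces to genericity via Lemma~\ref{easy-full-dim}. In particular, we do not need the description $\rho(\Gen(K))=S^*_\F(\Q_p)$ except to get nonemptiness, and even that follows more simply: $\F$ itself has full dimension, so by compactness some type of full dimension exists.
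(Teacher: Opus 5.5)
Your proof is correct. For the $K$-part it takes a different route from the paper; the passage from $K$ to $S$ is the same in both.

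The paper argues from $S_K(\Q_p)$. It uses the known fact that $\Gen(K)$ is the unique minimal subflow of $S_K(\Q_p)$, together with the identity $\rho(\Gen(K))=S^*_\F(\Q_p)$ for the induced map $\rho:S_K(\Q_p)\to S_\F(\Q_p)$. That identity is supported only by a fibre-dimension count and an ``easy to check''. Minimality then follows because the image of a minimal flow is minimal. Uniqueness follows because the preimage of any minimal $K$-subflow is a nonempty $K$-subflow of $S_K(\Q_p)$, hence contains $\Gen(K)$.

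You instead stay in $S_\F(\Q_p)$ and run the standard genericity argument directly. Lemma~\ref{easy-full-dim} makes every formula of a full-dimensional type $K$-generic, so $S^*_\F(\Q_p)$ lies in every orbit closure $\overline{Kq}$.

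Your version is more self-contained. It needs neither the structure of $\Gen(K)$ nor surjectivity of $\rho$ on generics. In fact it gives that surjectivity as a by-product: $\rho(\Gen(K))$ is a nonempty closed $K$-invariant set, so your Step 1 forces $\rho(\Gen(K))\supseteq S^*_\F(\Q_p)$.

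What the paper's route records, and yours does not, is the inclusion $\rho(\Gen(K))\subseteq S^*_\F(\Q_p)$, which comes from the fibre-dimension computation. The paper uses it afterwards when it sets $\xi=ro$ with $r\in\Gen(K)$ and needs $\xi\in S^*_\F(\Q_p)$. That inclusion is not needed for the lemma itself.
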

\begin{proof}
The image of the unique minimal $K$-subflow is minimal. Conversely, the inverse image of any
minimal $K$-subflow of $S_\F(\Q_p)$ contains $\Gen(K)$, proving uniqueness. Finally, $S^*_\F(\Q_p)$ is $S$-invariant (setwise)
because dimension is preserved by the action. Every nonempty closed $S$-subflow contains a minimal $K$-subflow, hence contains $S^*_\F(\Q_p)$, which proves the assertion for $S$.
\end{proof}

We now seek the special types required in Proposition \ref{prop-use}.

Let $\M$ be a very saturated elementary extension of $\Q_p$. Let $\mu_S$ be the type-definable infinitesimal
subgroup, the intersection of all $\Q_p$-definable neighborhoods (in $\M$) of the identity of $S(\M)$. Since we work in $\Q_p$ where global objects cannot be handled directly, we treat $\mu_S$ as the partial type consisting of all $\Q_p$-definable neighborhoods of the identity of $S$. 
The type-definable component $P^{00}$ of $P$ is also given in $\M$, to be the smallest $\Q_p$-type-definable subgroups of $P(\M)$ of small index in $P(\M)$. We also treat $P^{00}$ as a partial type over $\Q_p$.

By Proposition 3.5.4 \cite{YZ}, we pick $r\in \Gen(K)$ such that $r\vdash \mu_S$ and $r*s=r$ for every $s\vdash \mu_S$. Here, $*$ is the product on $S_S(\Q_p)$. 

Recall that, from \cite{YZ}, $P$ is definably amenable and, more precisely,
a definably amenable component of $S$. 
By Fact 2.3.2 \cite{YZ}, we pick $q\in S_P(M)$ whose  corresponding external/global type is $P^{00}$-invariant. 

\begin{fact}[Corollary 3.3.6 \cite{YZ}]\label{fact-invariant}
Let $p\in S_S(\Q_p)$ be $P^{00}(\Q_p)$-invariant. Let $a$ realize $p$ (in $\M$). Then there is $\epsilon\in \mu_S(\M)$ and $b\in P(\M)$ such that $a=\epsilon b$.
\end{fact}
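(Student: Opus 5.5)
The plan is to pass to the flag variety $\F=S/P$, show that the type of $aP$ is infinitely close to the base point $o$, and then lift back to $S$ through a local section. I read the hypothesis as left invariance, $hp=p$ for all $h\in P^{00}(\Q_p)$; this reading is an assumption, since $P^{00}$ is only specified as a partial type. Note that $P^{00}(\Q_p)$, the set of standard points of $P^{00}$, is a subgroup of $P$ of finite index. In these terms the conclusion $a=\epsilon b$ with $\epsilon\in\mu_S(\M)$ and $b\in P(\M)$ says exactly that $aP=\epsilon P$, i.e. that the point $a o$ of $\F(\M)$ is infinitesimally close to $o$.

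\textbf{Step 1 (push to $\F$).} Let $\bar p\in S_\F(\Q_p)$ be the image of $p$ under the definable map $S\to\F$, $x\mapsto xo$. Since that map is equivariant for the left action, $\bar p$ is again $P^{00}(\Q_p)$-invariant. Because $\F=K/(K\cap P)$ is a compact definable manifold, every point of $\F(\M)$ has a standard part in $\F(\Q_p)$. Let $x_0=\mathrm{st}(ao)$; it depends only on $\bar p$. For $h\in P^{00}(\Q_p)$, standard $h$ acts continuously and commutes with taking standard parts, so $hx_0=\mathrm{st}(h\,ao)=x_0$ by invariance of $\bar p$. Thus $x_0$ is a fixed point of the finite-index subgroup $P^{00}(\Q_p)$ of $P$.

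\textbf{Step 2 (the only fixed point is $o$).} This is the step I expect to be the main obstacle. The claim is that a finite-index subgroup $P_1$ of a minimal parabolic $P$ fixes no point of $\F$ other than $o$. I would use the Bruhat decomposition $\F=\bigsqcup_w PwP/P$ together with the Levi decomposition $P=Z(A)U$, where $A$ is a maximal $\Q_p$-split torus and $U$ is the unipotent radical. A finite-index subgroup $P_1$ still contains a finite-index subgroup of $A(\Q_p)$, hence a regular contracting element $t$. It also contains an open subgroup of $U(\Q_p)$. Suppose $P_1$ fixes a point $w\cdot o$ with $w\neq 1$. Fixing $w\cdot o$ forces $U_1\subseteq wPw^{-1}$ for some open subgroup $U_1$ of $U(\Q_p)$. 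By Zariski density this gives $\Uu\subseteq w\Pp w^{-1}$, which is impossible for $w\neq1$ by the Bruhat-cell dimension count. Any other fixed point is a $P$-translate $u\cdot w\cdot o$ of such a point and is excluded in the same way. Hence $x_0=o$. An alternative route is to use $t$ directly: the iterates $t^{n}$ contract the big cell onto $o$, and invariance under $t^n$ combined with standard parts should again force $x_0=o$.

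\textbf{Step 3 (lift back to $S$).} Using definable Skolem functions, I would choose a definable section $\sigma$ of $S\to\F$ on a $\Q_p$-definable open neighborhood of $o$. The section should satisfy $\sigma(o)=e$ and be continuous near $o$; continuity can be arranged because definable functions in $\pCF$ are continuous off a lower-dimensional set, and the section can be adjusted via translates. Since $ao$ is infinitesimally close to $o$, the element $\epsilon:=\sigma(ao)$ lies in $\mu_S(\M)$. Put $b:=\epsilon^{-1}a$. Then $bo=\epsilon^{-1}ao=o$, so $b\in P(\M)$, and $a=\epsilon b$ as required.

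If it turns out that the intended invariance is on the right, I would first rewrite it as left invariance of the inverse type and then run the same argument. As a fallback, the Iwasawa decomposition $a=kb'$ with $k\in K(\M)$ and $b'\in P(\M)$, together with $k=\mathrm{st}(k)\epsilon'$, reduces everything to showing $\mathrm{st}(k)\in P$. That is again the fixed-point statement of Step 2.
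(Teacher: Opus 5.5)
\textbf{Verdict.} Your overall plan is sound: push to $\F$, show that $\mathrm{st}(ao)$ is a fixed point, show that $o$ is the only fixed point, then lift through a local section at $o$. Steps 1 and 3 are fine. For Step 3, the big cell $v\mapsto vo$ on the opposite unipotent radical gives the continuous local section directly. The paper gives no proof of this statement (it is imported from \cite{YZ}), so your argument has to stand on its own, and as written it does not: Step 2 rests on a false claim.

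\textbf{The gap.} You assert that $P^{00}(\Q_p)$ has finite index in $P$. This fails in general, because a type-definable subgroup of bounded index can have very few standard points. Already for $S=\SL_2(\Q_p)$ with $P=TU$ upper triangular, the image of $P^{00}$ under $P\to P/U\cong T\cong\mathbb{G}_m$ is $T^{00}\subseteq T^{0}\subseteq\bigcap_n(\M^*)^n$. The $\Q_p$-points of this intersection form $\bigcap_n(\Q_p^*)^n=\{1\}$. Hence $P^{00}(\Q_p)\subseteq U(\Q_p)$, which has infinite index in $P$. The same computation with $\Q_p$-characters of the Levi factor shows that $P^{00}(\Q_p)$ never contains an element of $A(\Q_p)$ of infinite order. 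So your alternative route through a contracting element $t$ is unavailable. Your derivation that $P_1$ contains an open subgroup of $U(\Q_p)$ also loses its premise.

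\textbf{The missing idea.} What you need is that the unipotent radical lies inside $P^{00}$.
\begin{itemize}
\item $P^{00}\cap U(\M)$ is a $\Q_p$-type-definable subgroup of bounded index in $U(\M)$, so it contains $U^{00}$.
\item $U^{00}=U(\M)$. The additive group has a translation-invariant global type at infinity, and a group with a global invariant type has no proper type-definable subgroup of bounded index. Then induct along a central series of $\mathbf{U}$ with vector-group quotients, using that $G^{00}\cap N$ has bounded index in $N$ and that $G^{00}$ maps onto $(G/N)^{00}$.
\item Hence $U(\Q_p)\subseteq P^{00}(\Q_p)$.
\end{itemize}
Your Bruhat argument, run with $U_1=U(\Q_p)$, then shows that $o$ is the unique $U(\Q_p)$-fixed point of $\F$. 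Indeed, if $U(\Q_p)$ fixes $u\,w\,o$ then $w^{-1}U(\Q_p)w\subseteq P$. For $w\neq 1$ this fails on a root group $U_\alpha$ with $w^{-1}\alpha<0$.

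With this replacement your proof goes through. In Lemma~\ref{lemma-compute} the hypothesis is used as global $P^{00}(\M)$-invariance, which implies invariance under the standard points. So the repaired argument also covers the paper's application.
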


Let $u:=r*q\in S_S(\Q_p)$ and $\xi:=r o\in S^*_\F(\Q_p)$. Since $q$ concentrates on $P$, it fixes $o$, consequently, $uo=\xi$.

\begin{lemma}\label{lemma-compute}
$uz =\xi$ for every $z\in S_\F(\Q_p)$.
\end{lemma}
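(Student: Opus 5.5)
We need to show $uz=\xi$ for every $z\in S_\F(\Q_p)$, where $u=r*q$. The plan is to first treat realized points $z=ko$ with $k\in K$ (equivalently $z=so$ with $s\in S$), and then pass to arbitrary types by continuity. Since $\F=Ko$ is dense in $S_\F(\Q_p)$ and the map $z\mapsto uz$ is not obviously continuous in $z$, I would instead argue directly with realizations: write $z=\mathrm{tp}(c/\Q_p)$ and compute $uz$ as $\mathrm{tp}(abc/\Q_p)$. Here $b\models q$ and $a\models r$ are chosen in $\M$ with the appropriate independence, namely the usual coheir/definable-type realization order for products of definable types, so that $q$ is realized over $\Q_p c$ and $r$ over $\Q_p b c$.

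\textbf{Step 1 (absorbing $q$).} Since $q$ is definable and $P^{00}$-invariant, I want to show that $qz$ concentrates on a type $z'$ with $b c\in$ the ``big cell'' in a controlled way. By the Bruhat decomposition of $\Ss$ with respect to $\Pp$, the element $c\in\F(\M)$ lies in some cell $PwP/P$. My plan is to use Fact~\ref{fact-invariant} in the following way. The type $q*s_c$, for $s_c$ a lift of $c$ chosen via the definable section $\alpha$, lies in $S_S(\Q_p)$, and after left-multiplying by $r$ it should become $P^{00}$-invariant. More concretely, $r*q*s$ is the limit of $r*q*g$, and the key claim is that $r*q*S_S(\Q_p)$ consists of types $p$ such that $ro$-part is fixed. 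Rather than this route, a cleaner path is the following: for a realization $a b c'$ (with $c'$ a lift of $c$) of $u*\mathrm{tp}(c')$, I would show that $\mathrm{tp}(bc')$ is (the type of) an element $\epsilon' b''$ with $\epsilon'$ infinitesimal and $b''\in P(\M)$, i.e.\ that $bc'\cdot o$ is infinitesimally close to $o$. This is where the $P^{00}$-invariance of $q$ and Fact~\ref{fact-invariant} enter. Since $q$ is a generic-type in the amenable component $P$, its realization $b$ acts on $\F$ contracting everything outside a lower-dimensional (non-big-cell) set towards $o$. By genericity of $q$ relative to $c$ (realized after $c$), $b c' o$ should be infinitesimally close to $o$ as long as $c'o$ is in the open cell $\Pp^-o$; for $c$ outside the big cell, I would first apply $r$-genericity or translate by a Weyl element absorbed into $q$.

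\textbf{Step 2 (absorbing $r$).} Once $bc'o=\epsilon o$ with $\epsilon\vdash\mu_S$ (computed over parameters making $r$ still realized generically), we get $abc'o=a\epsilon o$. Since $r*s=r$ for $s\vdash\mu_S$, we have $\mathrm{tp}(a\epsilon/\Q_p)=r$, and hence $\mathrm{tp}(abc'o/\Q_p)=ro=\xi$.

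I expect the main obstacle to be Step 1, namely rigorously showing that $q$ (the $P^{00}$-invariant type) sends every point of $\F$, including non-generic points and types, into the $\mu_S$-neighbourhood of $o$. I would first try to apply Fact~\ref{fact-invariant} to the type $q*\mathrm{tp}(c')$, after checking that it is $P^{00}(\Q_p)$-invariant. Left translation preserves invariance, and the product of the definable type $q$ with anything on the right keeps left $P^{00}$-invariance. Fact~\ref{fact-invariant} then gives $bc'=\epsilon b''$, so $bc'o=\epsilon o$ directly, and this would handle all $z$ uniformly.
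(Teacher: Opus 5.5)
Your final argument (the last paragraph of Step 1 together with Step 2) is correct and is essentially the paper's proof. You lift $z$ to $t=\mathrm{tp}(c'/\Q_p)$ via the definable section, note that $q*t$ is still left $P^{00}$-invariant, apply Fact~\ref{fact-invariant} to get $bc'o=\epsilon o$ with $\epsilon\in\mu_S(\M)$, and absorb $\epsilon$ using $r*s=r$; the Bruhat-cell and contraction discussion is unnecessary and can be dropped. The only care needed is in Step 2: to write $\mathrm{tp}(a\epsilon/\Q_p)=r$, you must realize $r$ over $\Q_p bc'\epsilon$ rather than just over $\Q_p bc'$, or else argue on types as the paper does, via $(q*t)o=so$ with $s=\mathrm{tp}(\epsilon/\Q_p)$, so that $uz=r(so)=(r*s)o=ro$.
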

\begin{proof}
Lift $z\in S_\F(\Q_p)$ to $t\in S_S(\Q_p)$ by definable Skolem functions, so $z = to$. Since $q$ is $P^{00}$-invariant, so is $q*t$. Fact \ref{fact-invariant} implies that $q*t$ concentrates on $\mu_S P$.
Thus, there exists $s\vdash \mu_S$ such that $(q *t)o = so$. Then we have
$$uz =(r*q *t)o=(r * s)o = ro= \xi.$$
\end{proof}

Now we can prove Theorem \ref{thm-S}.

\begin{coro}
$\up(S)\cong \fullS_\F(\Q_p)$.
\end{coro}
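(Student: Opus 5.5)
The plan is to apply Proposition~\ref{prop-use} with $G=S=\Ss(\Q_p)$, $D=P$, $Z=\F=S/P$, the type $u=r*q\in S_S(\Q_p)$ and $\xi=ro\in\fullS_\F(\Q_p)$ constructed above. The general hypotheses of Proposition~\ref{prop-use} are available in our setting. The theory $\pCF$ is NIP. Every type over $\Q_p$ is definable by Delon's theorem. $P$ is definably amenable, as recalled from \cite{PYZ,YZ}. The quotient $\F=\Ff(\Q_p)$ is a definable set, not merely interpretable, being the $\Q_p$-points of the variety $\Ff$, and the projection $S\to\F$, $g\mapsto go$, is definable. A definable section $\alpha:\F\to S$ then exists by definable Skolem functions in $\pCF$.

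Next I check the two specific hypotheses. Condition (1), that $uz=\xi$ for every $z\in S_\F(\Q_p)$, is exactly Lemma~\ref{lemma-compute}. For condition (2), I need $X:=S_S(\Q_p)\xi$ to be minimal. Since the closure of the orbit $S\xi$ in the definable $S$-flow $S_\F(\Q_p)$ equals $S_S(\Q_p)\xi$, the set $X$ is the orbit closure of $\xi$. By Lemma~\ref{easy-full-dim} and the remark before Lemma~\ref{lemma-minimal}, $\xi=ro$ lies in $\fullS_\F(\Q_p)$, because $r\in\Gen(K)$ and $\rho(\Gen(K))=\fullS_\F(\Q_p)$. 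By Lemma~\ref{lemma-minimal}, $\fullS_\F(\Q_p)$ is a minimal $S$-subflow. The orbit closure of any of its points is therefore all of it, so $X=\fullS_\F(\Q_p)$ and $X$ is minimal.

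Proposition~\ref{prop-use} now gives that $X=\fullS_\F(\Q_p)$ is the universal minimal proximal definable $S$-flow, i.e.\ $\up(S)\cong\fullS_\F(\Q_p)$ as definable $S$-flows. Recall that we reduced to $S=\Ss(\Q_p)$ via the finite-index remark, and to $S$ from $G$ via Theorem~\ref{thm-reduce}. Proximality itself is part of the conclusion of the proposition, and it can also be seen directly: for $x,y\in X$, apply $u$ to both and use (1) to get $ux=uy=\xi$.

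The only delicate point is confirming that $\xi$ genuinely lands in $\fullS_\F(\Q_p)$, i.e.\ that $ro$, computed via the $S_S(\Q_p)$-action, agrees with $\rho(r)$ for $r$ viewed as a type on $K\subseteq S$. This holds because the action of a type concentrating on $K$ on $o$ is computed by realizing it and applying the definable map $k\mapsto ko$. Beyond that check the corollary is a direct assembly of the earlier lemmas.
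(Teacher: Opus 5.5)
Your proposal is correct and follows the paper's own proof. It applies Proposition~\ref{prop-use} to $P\le S$ with $u=r*q$ and $\xi=ro$, uses Lemma~\ref{lemma-compute} for condition (1), and uses Lemma~\ref{lemma-minimal} for the minimality of $X=S_S(\Q_p)\xi=\fullS_\F(\Q_p)$. You additionally spell out checks the paper leaves implicit: the standing hypotheses (NIP, definability of types, definability of $\F$, the Skolem section) and the membership $\xi\in\fullS_\F(\Q_p)$.
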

\begin{proof}
We apply Proposition \ref{prop-use} to $P\leq S$. Lemma \ref{lemma-compute} says that $uz=\xi$ for every $z\in S_\F(\Q_p)$. The minimality of $S^*_\F(\Q_p)=S_S(\Q_p)\xi$ follows from Lemma \ref{lemma-minimal}. Hence, all hypotheses of Proposition~\ref{prop-use} are satisfied.
\end{proof}

\begin{example}
Let $\Ss=\SL_2$ and $\Pp$ be the upper triangular Borel subgroup. Then $\F=\mathbb{P}^1$. So $S_{\PP^1,\mathrm{na}}(\Q_p)=S^*_{\mathbb{P}^1}(\Q_p)$ is the universal minimal proximal definable $\SL_2(\Q_p)$-flow.
\end{example}

\section*{Declaration of AI use}

The author used ChatGPT (OpenAI) for assistance with language editing and for discussing and checking mathematical arguments during the preparation of this manuscript. All mathematical statements, arguments, and proofs included in the final version were independently verified by the author, who takes full responsibility for the content of the article.

\end{document}